\documentclass[a4paper]{article}

\usepackage{mathtools, amssymb}
    \def\ge{\geqslant}
    \def\le{\leqslant}
    \def\NN{\mathbb N}
    \def\MM{\mathbb M}
    \def\FF{\mathbb F}
    \def\ii{{\ddot\imath}}
    \def\jj{{\ddot\jmath}}

    \def\GF(#1){\mathbb F_{#1}}
    \def\tr{\operatorname{tr}}
    \def\lcm{\operatorname{lcm}}
    \def\norm{\operatorname{norm}}
    \def\minpoly{\operatorname{minpoly}}
    \def\charpoly{\operatorname{charpoly}}

\usepackage{amsthm}
    \newtheorem{theorem}{Theorem}
    \newtheorem{fact}[theorem]{Fact} 
    
    \newtheorem{claim}[theorem]{Claim}
    \newtheorem{corollary}[theorem]{Corollary}
     
    \theoremstyle{definition}
    
    \theoremstyle{remark}

\usepackage{tikz-cd}
    \def\sma#1{\vscale{\mat{#1}}}
    \def\ssm#1{\vscale{\sma{#1}}}
    \def\bma#1{\left[\mat{#1}\right]}
    
    \def\bsm#1{\left[\sma{#1}\right]}
    \def\psm#1{\left(\sma{#1}\right)}
    \def\mat#1{\begin{matrix}#1\end{matrix}}
    \def\vscale#1{\vcenter{\hbox{\scalebox{0.8}{$#1$}}}}
    \def\hline{\noalign{\vskip.8pt\hrule height.4pt\vskip.8pt}}
    \def\lsm#1{\delimitershortfall3pt\left\lfloor\overline{\sma{#1}}\right|}
    \def\lss#1{\delimitershortfall3pt\left\lfloor\overline{\ssm{#1}}\right|}

\usepackage{xurl, hyperref}
    \hypersetup{colorlinks, allcolors=blue!50!purple}

\begin{document}

               \title{Matrix Representations of Finite Fields}
               \author{Tzu-Wei Lin, Bo-Jiun Lee, Hsin-Po Wang}
                                 \maketitle 

\begin{abstract} 
    Finite fields are important algebraic structures that have a wide range
    of applications in fields such as coding theory and cryptography.  But
    the standard construction of finite field extensions through polynomial
    quotients is computationally opaque, especially when we want to identify
    a degree-$2$ extension of $\GF(8)$ and a degree-$3$ extension of
    $\GF(4)$.

    In this short note,
    we present a coherent family of representations by matrices
    $\rho_q^n\colon \GF(q^n) \to \GF(q)^{n\times n}$
    for all prime powers $q$ and all degrees $n \ge 1$.
    These maps are chosen so that concatenating $\rho_{q^n}^m$
    and $\rho_q^n$ recovers $\rho_q^{nm}$
    up to row and column permutations.
    As a consequence, the images of $\rho_2^6$ can be partitioned
    into four $3 \times 3$ blocks or nine $2 \times 2$ blocks
    to visualize the subfield chains $\GF(64) / \GF(8) / \GF(2)$
    and $\GF(64) / \GF(4) / \GF(2)$ at the same time.
    A variant $\varrho$ is also discussed, wherein the Frobenius automorphism
    is represented by a cyclic shift of rows and columns.

    From an educational point of view, these rhos give
    explicit and self-contained mental models of finite fields;
    subfields, trace, norm, minimal polynomial, and Frobenius all become
    visible through matrix algebra accessible to most students.
    From a theoretical point of view, the construction exhibits structural
    implications of Conway polynomials and the normal basis theorem.
\end{abstract}

\section{Introduction}

    Finite fields have many practical applications
    in coding theory, cryptography, randomized algorithms,
    combinatorial design, and other related fields
    \cite{MuM07,LNC09,MMP10,MuP13}.
    They are particularly useful because their elements
    can be represented by a finite amount of memory,
    and they have all the algebraic operations we like, especially division.

    There are, however, some subtleties when it comes to
    implementing them on a computer.
    Take $\GF(64)$ as an example.
    The standard approach is to find a degree-$6$ irreducible polynomial
    $f_2^6(x) = e_0 + e_1 x + \dotsb + e_6 x^6 \in \GF(2)[x]$ to form
    $\GF(64) \coloneqq \GF(2)[\epsilon] / \langle f_2^6(\epsilon) \rangle$.
    Each element of $\GF(64)$ is then encoded by $6$ bits,
    understood as the coefficients of a polynomial.
    The problem with this implementation is that it is not possible
    to compute the product of ``$111000$'' and ``$010101$''
    without looking up $f_2^6$ or the multiplication table.
    Another common implementation of $\GF(64)$
    is to represent each element as a power of $\epsilon$.
    This way, multiplication becomes addition of the exponents,
    but addition requires table lookups again.
    To sum up, standard implementations of finite fields tend to favor
    either the additive structure or the multiplicative structure,
    leaving the other opaque and dependent on a nontrivial lookup.

    Fortunately, a folklore trick makes both structures transparent at once:
    When treating $\GF(64)$ as a 6D vector space
    $\GF(2) \oplus \GF(2)\epsilon \oplus \dotsb \oplus \GF(2)\epsilon^5$
    over $\GF(2)$, each element of $\GF(64)$ can be thought of as
    a $6 \times 6$ matrix over $\GF(2)$ that encodes how it transforms
    the basis vectors by multiplication in $\GF(64)$.
    For instance, the matrix representation of $\epsilon$ is
    \begin{equation}
        \bsm{
            0 & 0 & 0 & 0 & 0 & -e_0 \\
            1 & 0 & 0 & 0 & 0 & -e_1 \\
            0 & 1 & 0 & 0 & 0 & -e_2 \\
            0 & 0 & 1 & 0 & 0 & -e_3 \\
            0 & 0 & 0 & 1 & 0 & -e_4 \\
            0 & 0 & 0 & 0 & 1 & -e_5
        } \in \GF(2)^{6\times6}                             \label{companion}
    \end{equation}
    because $\epsilon$ sends $1$, $\epsilon$, $\epsilon^2, \epsilon^3$,
    $\epsilon^4$, $\epsilon^5$ to $\epsilon$, $\epsilon^2, \epsilon^3$,
    $\epsilon^4$, $\epsilon^5$, $\epsilon^6 - f_2^6(\epsilon)
    = -e_0 - e_1 \epsilon - \dotsb - e_5 \epsilon^5$, respectively.
    This way, the addition and multiplication of $\GF(64)$
    are just the addition and multiplication of these $6 \times 6$ matrices.
    Education-wise, \eqref{companion} presents finite fields
    to students without training in abstract algebra.
    This makes topics like Reed--Solomon codes \cite{ReS60} and
    secret sharing \cite{Sha179} easier and faster to teach.
    It is particularly useful when teaching RAID \cite{Pla97},
    QR codes \cite{ISO24}, and the AES block cipher \cite{DaR02}
    because only one fixed finite field $\GF(256)$ is used.

    For more advanced topics, such as BCH codes \cite{Hoc59,BoC60},
    rank-metric codes \cite{Gab85}, and pairing-based cryptography
    \cite{GPS08}, we often need to work with a pair of
    fields---a base and its extension---at the same time.
    It is therefore desirable to have the fields presented
    in a way that visualizes the extension structure.
    To be more precise, we often want to start from $\GF(8)$ as a base field
    $\rho_2^3 \colon \GF(8) \to \GF(2)^{3\times3}$
    and build up $\GF(64)$ as a degree-$2$ extension
    $\rho_8^2 \colon \GF(64) \to \GF(8)^{2\times2}$.
    Note that we can combine these two maps to get
    \[
        \def\r{\rho_2^3}
        \GF(64) \xrightarrow{\rho_8^2} 
        \GF(8)^{2\times2} \xrightarrow{\bsm{\r&\r\\\r&\r}}
        (\GF(2)^{3\times3})^{2\times2} \cong \GF(2)^{6\times6},
    \]
    which gives us a self-contained representation of $\GF(64)$
    as an extension of $\GF(2)$.
    This is not the only path to obtain $\GF(64)$ over $\GF(2)$.
    We can also go through
    \[
        \def\r{\rho_2^2}
        \GF(64) \xrightarrow{\rho_4^3} 
        \GF(4)^{3\times3} \xrightarrow{\bsm{\r&\r&\r\\\r&\r&\r\\\r&\r&\r}}
        (\GF(2)^{2\times2})^{3\times3} \cong \GF(2)^{6\times6}.
    \]
    A priori, these two paths may produce
    different matrices in $\GF(2)^{6\times6}$ even though
    the goal is to construct the same field $\GF(64)$.

    In this short note, we argue that it is possible to present
    all finite fields in a coherent way so that any composition of
    extensions leads to the same presentation so the latter
    encodes all subfield information simultaneously.

    Here is a concrete example demonstrating what exactly we are looking for.
    First, note that the following matrix algebra is isomorphic to $\GF(4)$:
    \begin{equation}
        \left\{
            \bsm{0 & 0 \\ 0 & 0},
            \bsm{0 & 1 \\ 1 & 1},
            \bsm{1 & 1 \\ 1 & 0},
            \bsm{1 & 0 \\ 0 & 1}
        \right\}
        \subset \GF(2)^{2\times 2}                                \label{GF4}
    \end{equation}
    We name the elements $0, A^1, A^2, A^3$ and observe\footnote{
    So the superscripts are not just labels but actual exponents.}
    that $A^i \cdot A^j = A^{(i+j)\%3}$.
    Note also that the following matrix algebra is isomorphic to $\GF(8)$:
    \begin{equation}
        \left\{
            \arraycolsep2.4pt
            \!\bsm{0 & 0 & 0 \\ 0 & 0 & 0 \\ 0 & 0 & 0}\!,
            \!\bsm{0 & 0 & 1 \\ 1 & 0 & 1 \\ 0 & 1 & 0}\!,
            \!\bsm{0 & 1 & 0 \\ 0 & 1 & 1 \\ 1 & 0 & 1}\!,
            \!\bsm{1 & 0 & 1 \\ 1 & 1 & 1 \\ 0 & 1 & 1}\!,
            \!\bsm{0 & 1 & 1 \\ 1 & 1 & 0 \\ 1 & 1 & 1}\!,
            \!\bsm{1 & 1 & 1 \\ 1 & 0 & 0 \\ 1 & 1 & 0}\!,
            \!\bsm{1 & 1 & 0 \\ 0 & 0 & 1 \\ 1 & 0 & 0}\!,
            \!\bsm{1 & 0 & 0 \\ 0 & 1 & 0 \\ 0 & 0 & 1}\!
        \right\}
        \subset \GF(2)^{3\times3}                                 \label{GF8}
    \end{equation}
    We name the elements $0, B^1, \dotsc, B^7$ and observe\footnote{
    So the superscripts are not just labels but actual exponents.}
    that $B^i \cdot B^j = B^{(i+j)\%7}$.

    Now, alongside \eqref{companion}, we claim that $\GF(64)$
    is generated by the following $6 \times 6$ matrix.
    \begin{equation}
        \vscale{\left[
            \begin{array}{cc|cc|cc}
                1 & 0 & 1 & 0 & 1 & 1 \\
                0 & 1 & 0 & 1 & 1 & 0 \\\hline
                1 & 1 & 0 & 0 & 0 & 1 \\
                1 & 0 & 0 & 0 & 1 & 1 \\\hline
                1 & 0 & 1 & 1 & 0 & 0 \\
                0 & 1 & 1 & 0 & 0 & 0 \\
            \end{array}
        \right]}
        =
        \bma{
            A^3 & A^3 & A^2 \\
            A^2 &   0 & A^1 \\
            A^3 & A^2 &   0
        }
        \in \eqref{GF4}^{3\times3} \subset \GF(2)^{6\times6}      \label{2^3}
    \end{equation}
    This $6 \times 6$ binary matrix is divided into nine $2 \times 2$ blocks,
    each of which is an element of \eqref{GF4}.
    In other words, both the elements of $\{0, 1\}$ and
    the elements of \eqref{GF4} can be used to describe $\GF(64)$.
    Moreover,
    \begin{equation}
         \eqref{2^3}^{21i} = \bma{A^i \\& A^i \\&& A^i}
        \in \eqref{GF4}^{3\times3} \subset \GF(2)^{6\times6},     \label{I_3}
    \end{equation}
    meaning that $\eqref{2^3}^{21}$ does not invent a new model
    for its subfield $\GF(4)$---\eqref{GF4} embeds into
    the model of $\GF(64)$ by the most boring diagonal map.

    Permuting the rows and columns of \eqref{2^3}
    using $\psm{1 & 2 & 3 & 4 & 5 & 6 \\ 1 & 4 & 2 & 5 & 3 & 6}$, we get
    \begin{equation}
        \vscale{\left[
            \begin{array}{c|c}
                \sma{1 & 1 & 1 \\ 1 & 0 & 0 \\ 1 & 1 & 0} &
                \sma{0 & 0 & 1 \\ 1 & 0 & 1 \\ 0 & 1 & 0} \\\hline
                \sma{0 & 0 & 1 \\ 1 & 0 & 1 \\ 0 & 1 & 0} &
                \sma{1 & 1 & 0 \\ 0 & 0 & 1 \\ 1 & 0 & 0}
            \end{array}
        \right]}
        =
        \bma{
            B^5 & B^1 \\
            B^1 & B^6
        }
        \in \eqref{GF8}^{2\times2} \subset \GF(2)^{6\times6}.     \label{3^2}
    \end{equation}
    This permuted matrix is divided into four $3 \times 3$ blocks,
    each of which is an element of \eqref{GF8}.
    We claim that it also generates $\GF(64)$.
    Moreover,
    \begin{equation}
        \eqref{3^2}^{9j} = \bma{B^j \\& B^j}
        \in \eqref{GF8}^{2\times2} \subset \GF(2)^{6\times6},     \label{I_2}
    \end{equation}
    meaning that $\eqref{3^2}^9$ recovers its subfield
    $\GF(8)$ by simply repeating \eqref{GF8} two times.
    Moreover, $\eqref{2^3}^{63} = \eqref{3^2}^{63} = I_6$,
    the $6 \times 6$ identity matrix.
    $I_6$ together with $0 \cdot I_6$ recover
    $\GF(2)$ by repeating $\{0, 1\}$ six times.

    The paragraphs above suggest that \eqref{2^3} and \eqref{3^2}
    provide visualizations of the subfield chains
    $\GF(64) / \GF(8) / \GF(2)$ and $\GF(64) / \GF(4) / \GF(2)$.
    So together they provide a unified model of the subfield lattice
    \begin{equation}
        \begin{tikzpicture} [y=1cm, x=2cm]
            \tikzset{baseline={([yshift=-.5ex]current bounding box.center)}}
            \draw [nodes = {auto, sloped}]
                ({ln(2/3)}, {ln(6)}) node (E) {$\GF(64)$}
                ({ln(1/3)}, {ln(3)}) node (B) {$\GF(8)$}
                ({ln(2)}, {ln(2)}) node (A) {$\GF(4)$}
                (0, 0) node (1) {$\GF(2)$}
                (E) -- node {$2$} (B)
                (E) -- node {$3$} (A)
                (B) -- node {$3$} (1)
                (A) -- node {$2$} (1)
            ;
        \end{tikzpicture}                                    \label{additive}
    \end{equation}
    The only inconvenience is the permutation
    needed to go from \eqref{2^3} to \eqref{3^2}.
    More generally, we have the following result.

    \begin{theorem} [main]                                   \label{thm:main}
        For every prime power $q$ and every degree $n \ge 1$,
        there exists a matrix representation
        $\rho_q^n\colon \GF(q^n) \to \GF(q)^{n\times n}$ that is
        an injective $\GF(q)$-algebra homomorphism, hence a field embedding.
        These maps can be made globally compatible in the sense that,
        for every pair of degrees $m, n \ge 1$, the composition
        \begin{equation}
            \def\.{\cdot}
            \def\r{\rho_q^n}
            \def\M{\bsm{\r&\.\.&\r\\:&::&:\\\r&\.\.&\r}}
            \GF(q^{nm}) \xrightarrow{\rho_{q^n}^m} 
            \GF(q^n)^{m\times m} \xrightarrow{\M}
            (\GF(q)^{n\times n})^{m\times m}
            \cong \GF(q)^{nm\times nm}                          \label{block}
        \end{equation}
        coincides with
        $\rho_q^{nm}\colon \GF(q^{nm}) \to \GF(q)^{nm\times nm}$
        up to row and column permutations.
    \end{theorem}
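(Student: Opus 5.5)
The plan is to reduce the whole statement to choosing bases compatibly. Any $\GF(q)$-basis $\mathcal B$ of $\GF(q^n)$ gives an injective $\GF(q)$-algebra homomorphism $\GF(q^n)\to\GF(q)^{n\times n}$: send $x$ to the matrix of the map $y\mapsto xy$ in that basis. Injectivity holds because $x\cdot 1=x$.

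Now suppose $\mathcal B'$ is a $\GF(q^n)$-basis of $\GF(q^{nm})$, and $\rho_q^n,\rho_{q^n}^m$ are built from $\mathcal B,\mathcal B'$. Then the composition \eqref{block} is exactly the multiplication matrix of $x$ in the product basis $\{bc : b\in\mathcal B,\ c\in\mathcal B'\}$, ordered $c$-major. To see this, write $x\cdot c_j=\sum_i a_{ij}c_i$ with $a_{ij}\in\GF(q^n)$. Then $x\cdot b\,c_j=\sum_i (a_{ij}b)c_i$, and expanding $a_{ij}b$ in $\mathcal B$ gives the $(i,j)$ block $\rho_q^n(a_{ij})$.

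Hence it suffices to choose, for all pairs $a\mid b$ of exponents of the characteristic $p$, a basis $\mathcal B(a,b)$ of $\GF(p^b)$ over $\GF(p^a)$ with the set identity
\[
\mathcal B(a,c)=\mathcal B(a,b)\cdot\mathcal B(b,c)\quad\text{whenever } a\mid b\mid c.
\]
Two equal sets of basis vectors differ only by an ordering, and reordering a basis conjugates the matrix by a permutation matrix, i.e.\ permutes rows and columns simultaneously. Taking $q=p^a$, $\rho_q^n$ is then defined from $\mathcal B(a,an)$, and the theorem follows.

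To build the bases I would work inside a fixed algebraic closure of $\GF(p)$ and use prime-power towers. For each prime $\ell$ and each $k\ge1$, pick $\theta_{\ell,k}\in\GF(p^{\ell^k})$ that generates it over $\GF(p^{\ell^{k-1}})$; any element outside the subfield works, since the degree $\ell$ is prime. Then define $\mathcal B(a,b)$ to be the set of monomials
\[
\prod_{\ell}\ \prod_{k=v_\ell(a)+1}^{v_\ell(b)}\theta_{\ell,k}^{i_{\ell,k}},\qquad 0\le i_{\ell,k}<\ell .
\]
With this definition the multiplicativity identity is immediate. The index ranges $(v_\ell(a),v_\ell(b)]$ and $(v_\ell(b),v_\ell(c)]$ are disjoint and their union is $(v_\ell(a),v_\ell(c)]$. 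So a monomial for $(a,c)$ factors uniquely as a monomial for $(a,b)$ times one for $(b,c)$. The number of monomials is $\prod_\ell \ell^{v_\ell(b)-v_\ell(a)}=b/a$, which matches the dimension.

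The main obstacle is proving that $\mathcal B(a,b)$ really is a basis. I would adjoin the $\theta_{\ell,k}$ one at a time, in increasing $k$ for each $\ell$ and over the primes in any order, starting from $K_0=\GF(p^a)$. At each step the current field is some $\GF(p^d)$ with $v_\ell(d)=k-1$. The claim is that $\theta_{\ell,k}$ has degree exactly $\ell$ over $\GF(p^d)$.

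For this, note that $\GF(p^d)(\theta_{\ell,k})$ is the compositum $\GF(p^{\lcm(d,\ell^k)})$, since $\theta_{\ell,k}$ generates $\GF(p^{\ell^k})$ over its subfield $\GF(p^{\ell^{k-1}})\subseteq\GF(p^d)$. Its degree over $\GF(p^d)$ is $\lcm(d,\ell^k)/d=\ell$ precisely because $v_\ell(d)=k-1$. This uses only that finite fields of degree dividing $N$ are the unique subfields of $\GF(p^N)$, and that composita correspond to lcm's.

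The tower law then shows the monomials form a basis of the final field. That field is $\GF(p^{N})$ with $N=\lcm\bigl(a,\prod_\ell\ell^{v_\ell(b)}\bigr)=b$, because $a\mid b$. This completes the plan.
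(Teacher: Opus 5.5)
Your proposal is correct and is essentially the paper's proof: a basis gives the multiplication-matrix embedding, a product basis gives the block structure, and each prime step uses a power basis $1,\theta,\dots,\theta^{\ell-1}$ indexed by the prime power $\ell^k$ being adjoined (your $\theta_{\ell,k}$ plays the role of the paper's $\sigma=\omega_{r^{s+1}}$), so the total basis depends only on which prime powers are adjoined, not on their order. Your version is somewhat more general and more explicit: it shows that any generator of $\GF(p^{\ell^k})$ over $\GF(p^{\ell^{k-1}})$ works, so the Conway-compatibility of the $\omega$'s is not needed for this theorem, and the identity $\mathcal B(a,c)=\mathcal B(a,b)\cdot\mathcal B(b,c)$ spells out both the paper's informal ``multiset'' argument and the relative case over $\GF(p^a)$.
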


    The theorem above provides a systematic view of finite fields
    using matrices, allowing self-contained computations
    and a clear visualization of their structure.
    The block structure is particularly useful for understanding subfields.

    A formal proof of Theorem~\ref{thm:main}
    will be given in Section~\ref{sec:main}.
    To demonstrate the strategy, continue with the example above:
    Instead of the power basis $\{1, \epsilon, \dotsc, \epsilon^5\}$
    used by \eqref{companion}, \eqref{2^3} is based on
    \begin{equation}
        \def\e{\epsilon}
        \bma{\e^0 & \e^{21} & \e^9 & \e^{30} & \e^{18} & \e^{39}} =
        \bma{\e^0 & \e^{9} & \e^{18}} \otimes
        \bma{\e^0 & \e^{21}}\in \GF(64)^6                         \label{3*2}
    \end{equation}
    and \eqref{3^2} is based on its permutation
    \begin{equation}
        \def\e{\epsilon}
        \bma{\e^0 & \e^9 & \e^{18} & \e^{21} & \e^{30} & \e^{39}} =
        \bma{\e^0 & \e^{21}} \otimes
        \bma{\e^0 & \e^{9} & \e^{18}} \in \GF(64)^6               \label{2*3}
    \end{equation}
    with $\epsilon$ being a root of $f_2^6 = x^6 + x^4 + x^3 + x + 1$.
    Note that $\beta \coloneqq \epsilon^9$ generates $\GF(8)$ and
    $\alpha \coloneqq \epsilon^{21}$ generates $\GF(4)$, i.e.,
    the multiplicative group side of \eqref{additive} is as follows.
    \[
        \begin{tikzpicture} [y=1cm, x=2cm]
            \draw [nodes = {auto, sloped}]
                ({ln(2/3)}, {ln(6)}) node (E) {$\GF(64)$}
                ({ln(1/3)}, {ln(3)}) node (B) {$\GF(8)$}
                ({ln(2)}, {ln(2)}) node (A) {$\GF(4)$}
                (0, 0) node (1) {$\GF(2)$}
                (E) -- node {$2$} (B)
                (E) -- node {$3$} (A)
                (B) -- node {$3$} (1)
                (A) -- node {$2$} (1)
            ;
        \end{tikzpicture}
        \qquad
        \begin{tikzpicture} [y=1cm, x=2cm]
            \draw [nodes = {auto, sloped, '}]
                ({ln(2/3)}, {ln(6)}) node (E) {$\epsilon$}
                ({ln(1/3)}, {ln(3)}) node (B) {$\beta$}
                ({ln(2)}, {ln(2)}) node (A) {$\alpha$}
                ({ln(1)}, {ln(1)}) node (1) {$1$}
                (E) -- node {$9$} (B)
                (E) -- node {$21$} (A)
                (B) -- node {$7$} (1)
                (A) -- node {$3$} (1)
            ;
        \end{tikzpicture}
    \]
    So the bases \eqref{3*2} and \eqref{2*3}
    are just the two ways to Kronecker-product
    $\bma{\alpha^0 & \alpha^1}$ and  $\bma{\beta^0 & \beta^1 & \beta^2}$.
    The row and column permutations needed in the theorem statement
    are just to correct the order in which the Kronecker products are taken.

\section{Preliminaries}

    A \emph{field} is a set with addition and multiplication that have
    additive inverses, multiplicative inverses for nonzero elements,
    associativity for both operators,
    commutativity for both operators,
    and distributivity of multiplication over addition.
    A \emph{finite field} (or a \emph{Galois field})
    is a finite set equipped with field operations.
    The following well-known result classifies all finite fields.

    \begin{fact} [finite field classification]
        There exists a finite field $F$ of size $q$ if and only if
        $q = p^k$ for some prime $p$ and positive exponent $k$.
        Moreover, $F$ is unique up to isomorphism for each such $q$.
        This unique field is usually denoted by
        $\mathbb{F}_q$ or $\mathrm{GF}(q)$.
        See \cite[Theorem~2.5]{LNC09} for a proof.
    \end{fact}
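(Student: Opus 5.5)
We prove the two directions of the classification separately and then uniqueness.

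\emph{Necessity.} Let $F$ be a finite field of size $q$. The additive order of $1$ is finite. If this order were a product $ab$ with $a,b>1$, then $(a\cdot1)(b\cdot1)=0$ would exhibit zero divisors, so it is a prime $p$. Hence $\{0,1,\dotsc,(p-1)\cdot1\}$ is a subfield isomorphic to $\GF(p)=\mathbb Z/p$. The field $F$ is a finite-dimensional vector space over this prime subfield, say of dimension $k\ge1$, so $|F|=p^k$.

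\emph{Existence.} Given $q=p^k$, we consider the polynomial $g(x)=x^q-x\in\GF(p)[x]$ and pass to a splitting field $K$ of $g$. That splitting field is obtained by repeatedly adjoining roots of irreducible factors via quotients $L[x]/\langle h\rangle$, and the process terminates since the degree drops. Let $F\subseteq K$ be the set of roots of $g$. Because $g'=qx^{q-1}-1=-1$, the polynomial $g$ shares no root with $g'$, so $g$ is separable and $|F|=q$. The Frobenius map $x\mapsto x^p$ is additive in characteristic $p$ by the binomial theorem, since $p\mid\binom pi$ for $0<i<p$. Hence $x\mapsto x^q$ is a ring endomorphism, and its fixed set $F$ is closed under addition, multiplication, negation and inverses. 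Thus $F$ is a field of size $q$.

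\emph{Uniqueness.} Let $F$ be any field of size $q=p^k$. The unit group $F^\times$ has order $q-1$, so every element of $F$ satisfies $x^q=x$ by Lagrange. Therefore $F$ consists of exactly the $q$ roots of $x^q-x$, and $x^q-x$ splits over $F$ into distinct linear factors. Since these roots generate $F$ over its prime subfield $\GF(p)$, the field $F$ is a splitting field of $x^q-x$ over $\GF(p)$.

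The main obstacle is that splitting fields are unique up to isomorphism. We handle it with the standard extension lemma, proved by induction on $[K:\GF(p)]$. Choose an irreducible factor $h$ of $g$ over the current base and a root $\theta$ of $h$ in the first field. The isomorphism extends through $L[\theta]\cong L[x]/\langle h\rangle$ to a root of the image polynomial in the second field; such a root exists because $g$ splits there. Iterating this step yields an isomorphism between any two fields of size $q$.

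As an alternative route for this last step, we can take a generator $\gamma$ of the cyclic group $F^\times$. Its minimal polynomial $m$ divides $x^q-x$, so $m$ has a root $\gamma'$ in the other field $F'$. Then $F=\GF(p)[\gamma]\cong\GF(p)[x]/\langle m\rangle\cong\GF(p)[\gamma']\subseteq F'$. Sizes force equality, because $\deg m=k$ on both sides. This route needs only cyclicity of $F^\times$, which follows from counting elements of each order via $\sum_{d\mid n}\varphi(d)=n$.
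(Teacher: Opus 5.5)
Your proof is correct and follows the route the paper relies on through its citation of \cite[Theorem~2.5]{LNC09} and its remark on splitting fields: the prime subfield gives necessity, the separable splitting field of $x^q - x$ gives existence, and uniqueness of splitting fields gives uniqueness. Your alternative uniqueness argument, which takes a generator of $F^\times$ and maps its minimal polynomial into the other field, is also valid and avoids the general extension lemma.
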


    The theoretical reason that $\GF(p^k)$ is unique is that
    it is the splitting field of the polynomial $x^{p^k} - x$ over $\GF(p)$,
    and the splitting field of a polynomial is unique up to isomorphism.
    However, implementing finite fields in
    a computer algebra system (CAS)\footnote{
    To name a few, see the SageMath documentation
    \url{https://doc.sagemath.org/html/en/reference/finite_rings/sage/rings/finite_rings/finite_field_constructor.html}}%
    \footnote{GAP:
    \url{https://docs.gap-system.org/doc/ref/chap59.html}}%
    \footnote{Macaulay2:
    \url{https://macaulay2.com/doc/Macaulay2/share/doc/Macaulay2/Macaulay2Doc/html/_finite_spfields.html}}%
    \footnote{Magma:
    \url{https://magma.maths.usyd.edu.au/magma/handbook/text/210}}%
    \footnote{Wolfram:
    \url{https://reference.wolfram.com/language/ref/FiniteField.html}}
    involves making choices and breaking the symmetry.
    More precisely, we need to choose polynomials $f_p^k$
    to construct $\GF(p^k)$ as the quotient ring
    $\GF(p)[\kappa] / \langle f_p^k(\kappa) \rangle$.
    Afterwards there are two equally popular options:
    One option is to represent an element of $\GF(p^k)$
    as a polynomial in $\kappa$ of degree less than $k$.
    To do so, we record a $k$-tuple of $\GF(p)$-elements,
    which are essentially $k$ integers in the range $[0, p - 1]$.
    The other option is to represent
    a nonzero element of $\GF(p^k)$ as a power of $\kappa$,
    and so we record an integer in the range $[1, p^k - 1]$,
    and the integer $0$ is reserved for the additive unit of $\GF(p^k)$.
    The latter option turns multiplication into addition of the exponents
    and addition into looking up a table \cite[Exercise~2.8]{LNC09}\footnote{
    Such lookup tables are commonly called Zech logarithm tables.
    See also SageMath's document
    \url{https://doc.sagemath.org/html/en/reference/finite_rings/sage/rings/finite_rings/finite_field_givaro.html}.
    } of size $p^k$.
    When $p^k$ integers fit nicely into a computer's memory,
    this is preferred over multiplying polynomials modulo $f_p^k$.
    The only caveat is that the elements need to be powers of $\kappa$,
    so not every $\kappa$ works.

    \begin{fact} [multiplicative group]
        The \emph{multiplicative group} of a finite field $F$,
        denoted by $F^*$ or $F^\times$, is cyclic.
        An element that generates the whole group
        is said to be \emph{primitive}.
        If one root of an irreducible polynomial is primitive, then all roots
        are, and the polynomial is called a \emph{primitive polynomial}.
        See \cite[Theorem~2.8]{LNC09} for a proof.
    \end{fact}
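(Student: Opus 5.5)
We prove the two claims of the Fact in turn: that $F^*$ is cyclic, and that primitivity is shared by all roots of an irreducible polynomial. Let $F$ be a finite field of size $q$, so $F^*$ is a finite abelian group of order $N = q-1$.

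\emph{Cyclicity.} The tool is that a polynomial of degree $d$ over a field has at most $d$ roots. For each divisor $d \mid N$, let $\psi(d)$ be the number of elements of $F^*$ of order exactly $d$. If $\psi(d) > 0$, pick $a$ of order $d$. Then $1, a, \dotsc, a^{d-1}$ are $d$ distinct roots of $x^d - 1$. By the root bound, they are all the roots, so every element of order $d$ is some $a^i$. Now $a^i$ has order $d$ exactly when $\gcd(i,d)=1$, hence $\psi(d) = \varphi(d)$. Thus $\psi(d) \le \varphi(d)$ for every $d \mid N$. By Lagrange's theorem every element has order dividing $N$, so $\sum_{d \mid N} \psi(d) = N$. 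Gauss's identity gives $\sum_{d\mid N}\varphi(d) = N$. Together with $\psi(d) \le \varphi(d)$ termwise, this forces $\psi(d) = \varphi(d)$ for all $d \mid N$. In particular $\psi(N) = \varphi(N) \ge 1$, so an element of order $N$ exists and generates $F^*$. The only delicate point is to invoke the root bound in a field, which uses commutativity and the absence of zero divisors.

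\emph{Primitivity is shared by all roots.} Let $f \in \GF(p)[x]$ be irreducible of degree $k$, and let $\kappa$ be a root in $F = \GF(p)[\kappa] \cong \GF(p^k)$. The Frobenius map $\sigma(y) = y^p$ is a field automorphism of $F$ fixing $\GF(p)$: additivity follows from the binomial theorem, since $p \mid \binom{p}{i}$ for $0<i<p$, and injectivity plus finiteness gives bijectivity. Since $f$ has coefficients in $\GF(p)$, $f(\kappa^{p^j}) = \sigma^j(f(\kappa)) = 0$. The elements $\kappa, \kappa^p, \dotsc, \kappa^{p^{k-1}}$ are distinct. Indeed, if $\kappa^{p^i} = \kappa^{p^j}$ with $0 \le i < j < k$, then $\kappa$ would be fixed by $\sigma^{j-i}$. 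The fixed points of $\sigma^{j-i}$ are the roots of $x^{p^{j-i}} - x$, so they form a subfield of size at most $p^{j-i} < p^k$. Such a subfield cannot contain the generator $\kappa$ of $F$. So these $k$ conjugates are all the roots of $f$. Finally, $\gcd(p^j, p^k-1) = 1$, so $y \mapsto y^{p^j}$ preserves multiplicative order in $F^*$. Hence if $\kappa$ has order $p^k-1$, so does every root.

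The main obstacle is the counting step in the cyclicity argument. The rest is routine once Frobenius is known to be an automorphism.
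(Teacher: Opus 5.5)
Your proof is correct. The paper gives no argument for this Fact; it only cites \cite[Theorem~2.8]{LNC09}. So the natural comparison is with the textbook proof behind that citation, and that proof takes a different route. It is constructive: factor $q-1=\prod_i p_i^{r_i}$, and use the root bound to find $a_i\in F^*$ with $a_i^{(q-1)/p_i}\ne1$. Then $b_i=a_i^{(q-1)/p_i^{r_i}}$ has order exactly $p_i^{r_i}$, and the product of the $b_i$ has order $q-1$, because commuting elements of pairwise coprime orders multiply orders. You use the same field-theoretic input, namely that $x^d-1$ has at most $d$ roots, but you replace the prime-power assembly by the count $\sum_{d\mid N}\varphi(d)=N$. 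This buys you a sharper statement: $F^*$ has exactly $\varphi(d)$ elements of each order $d\mid q-1$, hence exactly $\varphi(q-1)$ primitive elements. The textbook route avoids Gauss's identity and is closer to a recipe for producing a generator once $q-1$ is factored.

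Theorem~2.8 covers only cyclicity, so the sentence about roots of an irreducible polynomial is not actually supported by the paper's citation. Your Frobenius-conjugate argument supplies it, and it is the standard one: the roots are exactly $\kappa^{p^j}$ for $0\le j<k$, and $y\mapsto y^{p^j}$ is an automorphism of $F^*$. Restricting to $f\in\GF(p)[x]$ is enough for how the paper uses primitive polynomials, namely the $f_p^k$. The argument also extends verbatim to $f\in\GF(q)[x]$. Use $y\mapsto y^q$ instead, which fixes $\GF(q)$ because $a^q=a$ there by Lagrange, and use the bound $q^{j-i}<q^k$ in the distinctness step. One cosmetic point: your $\varphi$ (totient) and $\sigma$ (Frobenius) clash with the paper's $\varphi_q$ (Frobenius) and its $\sigma=\omega_{r^{s+1}}$.
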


    The first obstacle we encounter along these implementation
    approaches is when we need to identify subfields in a large finite field.
    When is a finite field a subfield of another finite field?
    How do we derive the embedding map using the two $f$'s?
    To answer these, recall the following results.
    
    \begin{fact} [subfield criteria]
        Fix a prime $p$.
        Let $d$ and $k$ be positive exponents.
        The following are equivalent.
        \begin{itemize}
            \item $d$ divides $k$.
            \item $p^d - 1$ divides $p^k - 1$.
            \item $\GF(p^d)$ is a subfield of $\GF(p^k)$.
            \item $\GF(p^d)^*$ is a subgroup of $\GF(p^k)^*$.
            \item $\GF(p^k)$ is a vector space over $\GF(p^d)$.
        \end{itemize}
        This is a combination of 
        \cite[Lemma~2.1, Theorem~2.6, and Exercise~2.9]{LNC09}.
    \end{fact}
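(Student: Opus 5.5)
The plan is to prove the five statements equivalent through a cycle of implications, using only the classification fact and the cyclicity of the multiplicative group stated above. Throughout, ``subfield'', ``subgroup'' and ``vector space over'' are read up to isomorphism. That is, they mean: there is an injective field homomorphism $\GF(p^d) \to \GF(p^k)$, respectively an injective group homomorphism on the multiplicative groups. The vector-space structure in the fifth item is the one induced by such a field embedding.

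The purely arithmetic equivalence of the first two items comes first. If $k = dt$, then
\[
    p^k - 1 = (p^d - 1)\bigl(1 + p^d + p^{2d} + \dotsb + p^{(t-1)d}\bigr).
\]
Conversely, write $k = ad + r$ with $0 \le r < d$. Then
\[
    p^k - 1 = p^r\bigl(p^{ad} - 1\bigr) + \bigl(p^r - 1\bigr),
\]
and $p^d - 1$ divides $p^{ad} - 1$ by the first direction. Hence $p^d - 1$ divides $p^r - 1$. Since $0 \le p^r - 1 < p^d - 1$, this forces $p^r - 1 = 0$, so $r = 0$.

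Next come the easy structural implications.
\begin{itemize}
    \item Subfield implies vector space: any field extension is a vector space over the smaller field, by restricting scalar multiplication.
    \item Vector space implies $d \mid k$: a finite-dimensional vector space of dimension $t$ over $\GF(p^d)$ has $(p^d)^t$ elements. So $p^k = p^{dt}$, and unique factorization gives $k = dt$.
    \item Subfield implies subgroup: a field embedding restricts to an injective homomorphism of multiplicative groups.
    \item Subgroup implies $p^d - 1 \mid p^k - 1$: this is Lagrange's theorem.
\end{itemize}
It remains to close the cycle with one implication, from $d \mid k$ (or equivalently $p^d - 1 \mid p^k - 1$) to the existence of a subfield.

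This remaining step is the only nontrivial one. Consider
\[
    K = \{x \in \GF(p^k) : x^{p^d} = x\}.
\]
The map $x \mapsto x^{p}$ is a ring endomorphism in characteristic $p$, because the binomial coefficients $\binom{p}{i}$ vanish for $0 < i < p$. Hence its $d$-fold iterate $x \mapsto x^{p^d}$ is also a ring endomorphism, and its fixed set $K$ is closed under addition, multiplication, negation and inversion. So $K$ is a subfield.

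To count $K$, use the fact that $\GF(p^k)^*$ is cyclic of order $p^k - 1$. Because $p^d - 1$ divides $p^k - 1$, a cyclic group of order $p^k - 1$ has exactly $p^d - 1$ solutions of $x^{p^d - 1} = 1$. Adding $0$ gives $|K| = p^d$. By the classification fact, $K \cong \GF(p^d)$, which yields the required subfield.

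The main obstacle is this existence step, in particular verifying that $K$ is closed under addition, which is where the Frobenius argument is essential. Everything else is counting.
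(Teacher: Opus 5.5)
Your proof is correct and is essentially the standard argument behind the Lidl--Niederreiter results that the paper cites in place of a proof: you count $|\GF(p^k)| = (p^d)^t$ for the directions leading to $d \mid k$, and for the converse you realize $\GF(p^d)$ inside $\GF(p^k)$ as the fixed points of $x \mapsto x^{p^d}$. The only variation is that you count those fixed points using cyclicity of $\GF(p^k)^*$, whereas the textbook obtains all $p^d$ roots from $x^{p^d} - x \mid x^{p^k} - x = \prod_{a \in \GF(p^k)} (x - a)$; either works, since cyclicity is stated earlier in the paper and its proof does not depend on the subfield criterion.
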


    \begin{corollary} [gcd]
        The gcd of $p^c - 1$ and $p^d - 1$ is $p^{\gcd(c,d)} - 1$.
        The intersection of two subgroups $\GF(p^c)^*$ and $\GF(p^d)^*$
        in a large ambient field is $\GF(p^{\gcd(c,d)})^*$.
        The intersection of two subfields
        $\GF(p^c)$ and $\GF(p^d)$ is $\GF(p^{\gcd(c,d)})$.
    \end{corollary}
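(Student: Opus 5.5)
The plan is to prove the three claims of the Corollary in order, each one resting on the previous one, using only the Facts on the multiplicative group and the subfield criteria.

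\emph{The numerical gcd.} For $c \ge d$ we have the identity
\[
    p^c - 1 = p^{c-d}(p^d - 1) + (p^{c-d} - 1).
\]
Hence $\gcd(p^c - 1, p^d - 1) = \gcd(p^{c-d} - 1, p^d - 1)$. This is one subtraction step of the Euclidean algorithm, applied on the exponents. I will run strong induction on $c + d$ and apply this step repeatedly; it mirrors $\gcd(c,d) = \gcd(c-d,d)$. The base case is when one exponent reaches $0$, where $p^0 - 1 = 0$ and $\gcd(0, p^g - 1) = p^g - 1$. The conclusion is $\gcd(p^c - 1, p^d - 1) = p^{\gcd(c,d)} - 1$.

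\emph{The subgroups.} Let $E$ be the ambient finite field containing both $\GF(p^c)$ and $\GF(p^d)$. By the multiplicative-group Fact, $E^*$ is cyclic of some order $N$. By the subfield criteria, $p^c - 1$ and $p^d - 1$ both divide $N$. I will use the standard property that a cyclic group of order $N$ has exactly one subgroup of each order $a \mid N$, namely the set of solutions of $x^a = 1$.

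\begin{itemize}
    \item Take $x$ in the intersection $\GF(p^c)^* \cap \GF(p^d)^*$. Then $x^{p^c-1} = x^{p^d-1} = 1$. Writing the gcd as an integer combination of $p^c - 1$ and $p^d - 1$ (B\'ezout) gives $x^{p^g-1} = 1$, where $g = \gcd(c,d)$.
    \item Conversely, suppose $x^{p^g-1} = 1$. Since $p^g - 1$ divides both $p^c - 1$ and $p^d - 1$, we get $x^{p^c-1} = 1$ and $x^{p^d-1} = 1$. So $x$ lies in both subgroups.
\end{itemize}

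Therefore the intersection is the unique subgroup of order $p^g - 1$. This subgroup is $\GF(p^g)^*$, which exists inside $E$ because $g \mid c$ and the subfield criteria then apply.

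\emph{The subfields.} The intersection of two subfields is again a subfield. Its nonzero elements are exactly $\GF(p^c)^* \cap \GF(p^d)^* = \GF(p^g)^*$, so it has $p^g$ elements. It therefore equals $\GF(p^g)$, either by the classification Fact or directly as the set of nonzero elements of $\GF(p^g)$ together with $0$.

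The only mildly delicate point is making sure the ``unique subgroup of a given order'' argument is carried out inside a common ambient field. This is why both subgroup orders must first be shown to divide $N$, which the subfield criteria supply. Everything else is routine.
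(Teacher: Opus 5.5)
Your proof is correct. The paper gives no written proof of this corollary; it is listed right after the subfield-criteria Fact as an immediate consequence, so its implied route is structural and runs in the opposite direction to yours. That route goes as follows. The intersection $\GF(p^c)\cap\GF(p^d)$ is a subfield of $\GF(p^c)$, hence of size $p^e$ with $e\mid c$ and $e\mid d$. It contains $\GF(p^g)$ for $g=\gcd(c,d)$, which forces $e=g$. The arithmetic identity is then read off by counting: in the cyclic group $E^*$, the intersection of the subgroups of orders $p^c-1$ and $p^d-1$ has exactly $\gcd(p^c-1,p^d-1)$ elements. (The Fact's equivalence ``$d\mid k \iff p^d-1\mid p^k-1$'' by itself only shows that $p^g-1$ is the largest common divisor \emph{of the form} $p^e-1$. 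Some extra step, such as this count, is needed to show that the true gcd has that form.)

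You instead prove the arithmetic identity first, by the Euclidean step $p^c-1=p^{c-d}(p^d-1)+(p^{c-d}-1)$. This uses no field theory and works for any integer base. You then derive the group and field statements from B\'ezout, uniqueness of subgroups in a cyclic group, and cardinality. The structural route is shorter once the classification is taken for granted. Yours is more elementary, isolates the number theory, and follows the order in which the corollary is stated.

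A minor aside: if the ``large ambient field'' may be infinite (e.g.\ an algebraic closure), $E^*$ need not be cyclic. You can then pass to the finite compositum, or simply note that $x^{p^c-1}=1$ has at most $p^c-1$ roots in any field, so $\GF(p^c)^*$ is exactly its solution set.
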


    \begin{corollary} [lcm]
        The smallest $p^k - 1$ that is divisible
        by both $p^c - 1$ and $p^d - 1$ is $p^{\lcm(c,d)} - 1$.
        The smallest field-induced group containing both
        $\GF(p^c)^*$ and $\GF(p^d)^*$ is $\GF(p^{\lcm(c,d)})^*$.
        The compositum\footnote{ The compositum of two fields
        is the smallest field that contains both.}
        of $\GF(p^c)$ and $\GF(p^d)$ is $\GF(p^{\lcm(c,d)})$.
    \end{corollary}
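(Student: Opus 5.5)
The lcm corollary follows from the subfield criteria Fact in the same way as the gcd corollary. I will first prove the numerical statement and then transport it to groups and fields. For the numerical part, suppose $p^c-1$ and $p^d-1$ both divide $p^k-1$. By the equivalence of the first two bullets of the subfield criteria, $c\mid k$ and $d\mid k$, so $\lcm(c,d)\mid k$. Applying the same equivalence in the other direction, $p^{\lcm(c,d)}-1$ divides $p^k-1$; in particular $p^{\lcm(c,d)}-1\le p^k-1$. Conversely, $c\mid\lcm(c,d)$ and $d\mid\lcm(c,d)$ give that $p^{\lcm(c,d)}-1$ is itself divisible by both $p^c-1$ and $p^d-1$. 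It is therefore the smallest such number, and in fact it divides every other such number.

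For the group statement, I work inside a large ambient field $\GF(p^N)$ containing both $\GF(p^c)$ and $\GF(p^d)$. Here ``field-induced group'' means a group of the form $\GF(p^k)^*$ for a subfield $\GF(p^k)$ of the ambient field. Suppose $\GF(p^k)^*$ contains both $\GF(p^c)^*$ and $\GF(p^d)^*$. The subgroup bullet of the subfield criteria gives $c\mid k$ and $d\mid k$, hence $\lcm(c,d)\mid k$. The same Fact then shows $\GF(p^{\lcm(c,d)})^*\subseteq\GF(p^k)^*$.

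The one point that needs care is that $\GF(p^{\lcm(c,d)})$ actually sits inside the ambient field. Since $c\mid N$ and $d\mid N$, we have $\lcm(c,d)\mid N$, so it is a subfield. The intended copy is unique: a subfield of size $p^m$ of $\GF(p^N)$ consists exactly of the roots of $x^{p^m}-x$, since this polynomial has at most $p^m$ roots and every element of such a subfield satisfies it. This uniqueness is what lets the containments asserted by the subfield criteria be read as containments of the specific subfields present in the ambient field. Finally, $\GF(p^{\lcm(c,d)})^*$ contains both $\GF(p^c)^*$ and $\GF(p^d)^*$, again by $c,d\mid\lcm(c,d)$, so it is the smallest such group.

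For the compositum, any subfield $\GF(p^k)$ of the ambient field containing $\GF(p^c)$ and $\GF(p^d)$ has $c\mid k$ and $d\mid k$ by the subfield bullet. Hence $\lcm(c,d)\mid k$, and $\GF(p^{\lcm(c,d)})\subseteq\GF(p^k)$. Since $\GF(p^{\lcm(c,d)})$ itself contains both fields, it is the smallest field containing both, that is, the compositum.
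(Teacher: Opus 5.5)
Your proposal is correct and takes essentially the paper's route. The paper gives no separate proof and treats this corollary as an immediate consequence of the subfield-criteria Fact; you derive all three claims from that Fact via $c, d \mid k \Rightarrow \lcm(c,d) \mid k$. Your remark that a subfield of size $p^m$ of the ambient field is exactly the root set of $x^{p^m}-x$ supplies rigor the paper leaves implicit: it turns the Fact's ``up to isomorphism'' containments into actual containments inside the ambient field.
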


    Suppose that $\kappa$ is a generator of $\GF(p^k)^*$.
    We see, from the given fact, that $\kappa^{(p^k-1)/(p^d-1)}$
    generates the subgroup of $\GF(p^k)^*$ of size $p^d - 1$,
    and so it generates the copy of the subfield $\GF(p^d)$ in $\GF(p^k)$.
    That is one mathematically correct way to implement $\GF(p^d)$
    as an individual field, but not a reasonable one.
    The constructions in CASs go in the other way around:
    We first make $\delta$ a generator of $\GF(p^d)^*$
    by choosing a suitable $f_p^d$, and when we construct $\GF(p^k)$,
    we choose $f_p^k$ so that $\kappa$ generates $\GF(p^k)^*$
    and $\kappa^{(p^k-1)/(p^d-1)} = \delta$.
    This is called the \emph{norm-compatibility} condition in CAS literature
    because an equivalent way to state it is that $\norm(\kappa) = \delta$,
    where the field norm maps from $\GF(p^k)$ to $\GF(p^d)$.

    More generally, if $k$ has another divisor, say $c$,
    then $f_p^k$ should be chosen such that $\kappa^{(p^k-1)/(p^c-1)}$
    also coincides with $\epsilon$, the generator of $\GF(p^c)^*$.
    This motivates the definition of the Conway polynomials.

    \begin{theorem} [Conway polynomials]                   \label{thm:conway}
        Fix a prime $p$.
        Then there exists a family of
        irreducible polynomials $f_p^k \in \GF(p)[x]$ such that
        any root of $f_p^k$ generates $\GF(p^k)^*$ and
        $f_p^k(x)$ divides $f_p^d(x^{(p^k - 1) / (p^d - 1)})$
        whenever $d$ divides $k$.
        Note that the choice is not unique, and \emph{Conway polynomials}
        refer to the ones that are lexicographically minimal.
        See \cite{Nic88} for a proof.
        See \cite{Lub23} for a modern (2023) alternative.
    \end{theorem}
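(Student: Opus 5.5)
The plan is to build the polynomials from a coherent system of primitive elements inside one algebraic closure $\overline{\GF(p)}$. For every $k$ we pick a generator $\kappa_k$ of $\GF(p^k)^*$ so that
\[
    \kappa_k^{(p^k-1)/(p^d-1)} = \kappa_d \qquad \text{whenever } d \mid k,
\]
and then set $f_p^k \coloneqq \minpoly(\kappa_k)$ over $\GF(p)$. Granting such a system, the theorem follows quickly. Since $\kappa_k$ generates $\GF(p^k)^*$, it generates $\GF(p^k)$ over $\GF(p)$, so $f_p^k$ is irreducible of degree $k$. By the multiplicative group fact, every root of $f_p^k$ is then primitive. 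Put $e=(p^k-1)/(p^d-1)$. Then $\kappa_k$ is a root of $f_p^d(x^e)$, because $f_p^d(\kappa_k^e)=f_p^d(\kappa_d)=0$. Hence the minimal polynomial $f_p^k$ divides $f_p^d(x^e)$.

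We construct the system by induction on $k$, with the following extension step. Suppose $\kappa_d$ has been chosen for every proper divisor $d$ of $k$, and the choices are compatible among themselves. We show there is a primitive $\kappa_k$ whose norm down to each $\GF(p^d)$ is $\kappa_d$.

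\begin{itemize}
    \item Fix any generator $g$ of $\GF(p^k)^*$. Then $g^{e_d}$ generates $\GF(p^d)^*$, where $e_d=(p^k-1)/(p^d-1)$. So we may write $\kappa_d = g^{e_d a_d}$ with $a_d$ a unit modulo $p^d-1$.
    \item We look for $\kappa_k = g^a$. The conditions $\kappa_k^{e_d}=\kappa_d$ become the congruences $a \equiv a_d \pmod{p^d-1}$ for all proper divisors $d$ of $k$.
    \item By the Chinese remainder theorem, this system is solvable iff $a_c \equiv a_d$ modulo $\gcd(p^c-1,p^d-1)$ for every pair. By Corollary (gcd) this modulus is $p^{\gcd(c,d)}-1$.
    \item The pairwise condition follows from compatibility at $\gcd(c,d)$. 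Both $\kappa_c$ and $\kappa_d$ norm down to $\kappa_{\gcd(c,d)}$. Both norms are the same power map starting from $g$, so they impose the same residue of the exponent modulo $p^{\gcd(c,d)}-1$.
    \item Hence $a$ exists and is unique modulo $L=\lcm_d(p^d-1)$. It is a unit modulo $L$, since it is a unit modulo each $p^d-1$.
    \item Finally, we lift $a$ to a unit modulo $p^k-1$. This is possible because $L \mid p^k-1$ and the reduction map $(\mathbb Z/(p^k-1))^\times \to (\mathbb Z/L)^\times$ is surjective. For surjectivity, split by CRT into prime powers; a residue prime to $L$ can be adjusted at the primes dividing $p^k-1$ but not $L$.
\end{itemize}

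The resulting $g^a$ is primitive and has all the prescribed norms.

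Running this step for $k=1,2,3,\dots$ in increasing order proves existence. It also justifies the lexicographic refinement in the statement. At each degree the set of admissible primitive $\kappa_k$ is nonempty, so we may take the lexicographically least resulting $f_p^k$. Since the minimal polynomial of a conjugate $\kappa_k^{p^i}$ is the same $f_p^k$, the choice only matters up to Frobenius, and the inductive construction is unaffected.

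The main obstacle is the extension step, and specifically checking that the congruences imposed by different divisors agree on overlaps. The key input is Corollary~(gcd), which identifies $\gcd(p^c-1,p^d-1)$ with $p^{\gcd(c,d)}-1$ and so reduces pairwise consistency to compatibility at $\GF(p^{\gcd(c,d)})$. The unit-lifting argument is routine by comparison.
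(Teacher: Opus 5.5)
Your proof is correct and follows essentially the same route as the paper's appendix. You fix a generator of $\GF(p^k)^*$, write the previously chosen compatible generators as powers of it, and solve the resulting congruences by the Chinese remainder theorem, with pairwise consistency coming from compatibility at $\GF(p^{\gcd(c,d)})$ together with $\gcd(p^c-1,p^d-1)=p^{\gcd(c,d)}-1$; you then add a multiple of the lcm to make the exponent a unit modulo $p^k-1$. The differences are cosmetic: the paper imposes congruences only for the maximal proper divisors $k/r$ and recovers the rest by chaining norms, whereas you impose all of them, and you spell out the passage from a compatible system of roots to the divisibility $f_p^k \mid f_p^d(x^{(p^k-1)/(p^d-1)})$, which the paper leaves implicit.
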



    The existing proof of Theorem~\ref{thm:conway} shares
    a similar spirit as our proof of Theorem~\ref{thm:main},
    so we illustrate by example the proof of
    Theorem~\ref{thm:conway} in Appendix~\ref{sec:conway}.
    The remainder of this note is organized as follows:
    In Section~\ref{sec:12}, we give a degree-$12$ example to demonstrate
    that our method is not limited to square-free degrees.
    In Section~\ref{sec:main}, we prove Theorem~\ref{thm:main}.
    Then in Section~\ref{sec:frobenius}, we discuss how to use
    a similar mechanism to visualize the Frobenius automorphism.

\section{One More Example With Degree Twelve}                  \label{sec:12}

    Before we prove Theorem~\ref{thm:main}, let us look at
    the extension of $\GF(2)$ of degree $12 = 2 \cdot 2 \cdot 3$.
    We take $(a, b, c, e, k) = (2, 3, 4, 6, 12)$, and let
    $\alpha$, $\beta$, $\gamma$, $\epsilon$, and $\kappa$
    be the generators of (the multiplicative groups of)
    $\GF(4)$, $\GF(8)$, $\GF(16)$, $\GF(64)$, and $\GF(4096)$, respectively.
    Their subfield relations and
    norm-compatibility conditions are as below.
    \[
        \begin{tikzpicture} [y=1cm, x=2cm]
            \draw [nodes = {auto, sloped}]
                ({ln(4/3)}, {ln(12)}) node (F12) {$\GF(4096)$}
                ({ln(2/3)}, {ln(6)}) node (F6) {$\GF(64)$}
                ({ln(4)}, {ln(4)}) node (F4) {$\GF(16)$}
                ({ln(1/3)}, {ln(3)}) node (F3) {$\GF(8)$}
                ({ln(2)}, {ln(2)}) node (F2) {$\GF(4)$}
                ({ln(1)}, {ln(1)}) node (F1) {$\GF(2)$}
                (F12) -- node {$2$} (F6)
                (F12) -- node {$3$} (F4)
                (F6) -- node {$2$} (F3)
                (F6) -- node {$3$} (F2)
                (F4) -- node {$2$} (F2)
                (F3) -- node {$3$} (F1)
                (F2) -- node {$2$} (F1)
            ;
        \end{tikzpicture}
        \qquad
        \begin{tikzpicture} [y=1cm, x=2cm]
            \draw [nodes = {auto, sloped, '}]
                ({ln(4/3)}, {ln(12)}) node (F12) {$\kappa$}
                ({ln(2/3)}, {ln(6)}) node (F6) {$\epsilon$}
                ({ln(4)}, {ln(4)}) node (F4) {$\gamma$}
                ({ln(1/3)}, {ln(3)}) node (F3) {$\beta$}
                ({ln(2)}, {ln(2)}) node (F2) {$\alpha$}
                ({ln(1)}, {ln(1)}) node (F1) {$1$}
                (F12) -- node {$65$} (F6)
                (F12) -- node {$273$} (F4)
                (F6) -- node {$9$} (F3)
                (F6) -- node {$21$} (F2)
                (F4) -- node {$5$} (F2)
                (F3) -- node {$7$} (F1)
                (F2) -- node {$3$} (F1)
            ;
        \end{tikzpicture}
    \]
    While \eqref{2*3} and \eqref{3*2} are
    two subfield chains from $\GF(64)$ to $\GF(2)$,
    there are three subfield chains from $\GF(4096)$ to $\GF(2)$.
    Hence, we consider three different bases.
    
    The first basis of $\GF(4096) / \GF(2)$ is
    \begin{equation}
        \bma{1 & \beta & \beta^2} \otimes
        \bma{1 & \gamma} \otimes
        \bma{1 & \alpha}
        \in \GF(4096)^{12}.                                     \label{3*2*2}
    \end{equation}
    Because $\alpha$, $\beta$, and $\gamma$ are
    $\kappa^{(2^{12}-1)/(2^2-1)} = \kappa^{1365}$,
    $\kappa^{(2^{12}-1)/(2^3-1)} = \kappa^{585}$, and
    $\kappa^{(2^{12}-1)/(2^4-1)} = \kappa^{273}$, respectively,
    \eqref{3*2*2} is the same as
    \[
        \arraycolsep4pt
        \bma{
            \kappa^0      & \kappa^{1365} & \kappa^{273}  &
            \kappa^{1638} & \kappa^{585}  & \kappa^{1950} &
            \kappa^{858}  & \kappa^{2223} & \kappa^{1170} &
            \kappa^{2535} & \kappa^{1443} & \kappa^{2808}
        }.
    \]
    From this and
    $\kappa^{12} + \kappa^{11} + \kappa^{10} + \kappa^4 + 1 = 0$
    we can compute
    \begin{equation*}
        \rho_2^{12}(\kappa) =
        \vscale{\vscale{\left[
            \begin{array} {cc|cc||cc|cc||cc|cc}
                1 & 0 & 1 & 1 & 1 & 0 & 1 & 1 & 1 & 1 & 0 & 0 \\
                0 & 1 & 1 & 0 & 0 & 1 & 1 & 0 & 1 & 0 & 0 & 0 \\\hline
                0 & 1 & 0 & 1 & 0 & 1 & 0 & 1 & 0 & 0 & 1 & 1 \\
                1 & 1 & 1 & 1 & 1 & 1 & 1 & 1 & 0 & 0 & 1 & 0 \\\hline\hline
                1 & 1 & 0 & 0 & 0 & 0 & 0 & 0 & 0 & 1 & 1 & 1 \\
                1 & 0 & 0 & 0 & 0 & 0 & 0 & 0 & 1 & 1 & 1 & 0 \\\hline
                0 & 0 & 1 & 1 & 0 & 0 & 0 & 0 & 0 & 1 & 1 & 0 \\
                0 & 0 & 1 & 0 & 0 & 0 & 0 & 0 & 1 & 1 & 0 & 1 \\\hline\hline
                1 & 0 & 1 & 1 & 1 & 1 & 0 & 0 & 0 & 0 & 0 & 0 \\
                0 & 1 & 1 & 0 & 1 & 0 & 0 & 0 & 0 & 0 & 0 & 0 \\\hline
                0 & 1 & 0 & 1 & 0 & 0 & 1 & 1 & 0 & 0 & 0 & 0 \\
                1 & 1 & 1 & 1 & 0 & 0 & 1 & 0 & 0 & 0 & 0 & 0
            \end{array}
        \right]}}.
    \end{equation*}
    We can also use the first two terms of
    \eqref{3*2*2}---$\bma{1 & \beta & \beta^2} \otimes \bma{1 & \gamma}
    $---and the first term of
    \eqref{3*2*2}---$\bma{1 & \beta & \beta^2}$---to
    construct $\rho_4^6$ and $\rho_{16}^3$:
    \begin{equation*}
        \rho_4^6(\kappa) =
        \vscale{\left[
            \def\0{       0}
            \def\1{\alpha^1}
            \def\2{\alpha^2}
            \def\3{\alpha^3}
            \begin{array} {cc|cc|cc}
                \3 & \2 & \3 & \2 & \2 & \0 \\
                \1 & \1 & \1 & \1 & \0 & \2 \\\hline
                \2 & \0 & \0 & \0 & \1 & \2 \\
                \0 & \2 & \0 & \0 & \1 & \3 \\\hline
                \3 & \2 & \2 & \0 & \0 & \0 \\
                \1 & \1 & \0 & \2 & \0 & \0
            \end{array}
        \right]}
        \qquad
        \rho_{16}^3(\kappa) =
        \bma{
            \gamma^8    & \gamma^8    & \gamma^{10} \\
            \gamma^{10} &        0    & \gamma^2    \\
            \gamma^8    & \gamma^{10} &        0
        }
    \end{equation*}
    Our structure theorem says that these three matrices are actually one.
    For instance, the lower-left block of $\rho_2^{12}(\kappa)$ is
    $\lss{0 & 1 \\ 1 & 1}$, which is $\rho_2^2(\alpha^1)$,
    where $\alpha^1$ is the lower-left entry of $\rho_4^6(\kappa)$.
    Also the lower-left block of $\rho_4^6(\kappa)$ is
    $\lsm{\alpha^3 & \alpha^2 \\ \alpha^1 & \alpha^1}$,
    which is $\rho_4^2(\gamma^8)$,
    where $\gamma^8$ is the lower-left entry of $\rho_{16}^3(\kappa)$.
    In other words, \eqref{3*2*2} helps visualize
    the subfield chain $\GF(4096) / \GF(16) / \GF(4) / \GF(2)$. 

    The second basis of $\GF(4096) / \GF(2)$ we consider is
    \begin{equation}
        \bma{1 & \gamma} \otimes
        \bma{1 & \beta & \beta^2} \otimes
        \bma{1 & \alpha}
        \in \GF(4096)^{12}.                                     \label{2*3*2}
    \end{equation}
    Under this basis, the matrix representation of $\kappa$ is

    \begin{equation*}
        \rho_2^{12}(\kappa) =
        \vscale{\vscale{\left[
            \begin{array} {cc|cc|cc||cc|cc|cc}
                1 & 0 & 1 & 0 & 1 & 1 & 1 & 1 & 1 & 1 & 0 & 0 \\
                0 & 1 & 0 & 1 & 1 & 0 & 1 & 0 & 1 & 0 & 0 & 0 \\\hline
                1 & 1 & 0 & 0 & 0 & 1 & 0 & 0 & 0 & 0 & 1 & 1 \\
                1 & 0 & 0 & 0 & 1 & 1 & 0 & 0 & 0 & 0 & 1 & 0 \\\hline
                1 & 0 & 1 & 1 & 0 & 0 & 1 & 1 & 0 & 0 & 0 & 0 \\
                0 & 1 & 1 & 0 & 0 & 0 & 1 & 0 & 0 & 0 & 0 & 0 \\\hline\hline
                0 & 1 & 0 & 1 & 0 & 0 & 0 & 1 & 0 & 1 & 1 & 1 \\
                1 & 1 & 1 & 1 & 0 & 0 & 1 & 1 & 1 & 1 & 1 & 0 \\\hline
                0 & 0 & 0 & 0 & 0 & 1 & 1 & 1 & 0 & 0 & 1 & 0 \\
                0 & 0 & 0 & 0 & 1 & 1 & 1 & 0 & 0 & 0 & 0 & 1 \\\hline
                0 & 1 & 0 & 0 & 0 & 0 & 0 & 1 & 1 & 1 & 0 & 0 \\
                1 & 1 & 0 & 0 & 0 & 0 & 1 & 1 & 1 & 0 & 0 & 0
            \end{array}
        \right]}}.
    \end{equation*}
    Now using the prefixes of
    \eqref{2*3*2}---$\bma{1 & \gamma} \otimes \bma{1 & \beta & \beta^2}$
    and $\bma{1 & \gamma}$---we can construct
    \begin{equation*}
        \rho_4^6(\kappa) =
        \vscale{\left[
            \def\0{       0}
            \def\1{\alpha^1}
            \def\2{\alpha^2}
            \def\3{\alpha^3}
            \begin{array} {ccc|ccc}
                \3 & \3 & \2 & \2 & \2 & \0 \\
                \2 & \0 & \1 & \0 & \0 & \2 \\
                \3 & \2 & \0 & \2 & \0 & \0 \\\hline
                \1 & \1 & \0 & \1 & \1 & \2 \\
                \0 & \0 & \1 & \2 & \0 & \3 \\
                \1 & \0 & \0 & \1 & \2 & \0 \\
            \end{array}
        \right]}
        \qquad
        \rho_{64}^2(\kappa) =
        \bma{
            \epsilon^1    & \epsilon^{33} \\
            \epsilon^{12} & \epsilon^{29}
        }.
    \end{equation*}
    In particular,
    $
    \def\0{       0}
    \def\1{\alpha^1}
    \def\2{\alpha^2}
    \def\3{\alpha^3}
    \lsm{
        \1 & \1 & \0 \\
        \0 & \0 & \1 \\
        \1 & \0 & \0
    }$
    is the lower-left corner of $\rho_4^6(\kappa)$, which is
    $\rho_4^2(\epsilon^{12})$, where $\epsilon^{12}$
    is the lower-left corner of $\rho_{64}^2(\kappa)$.
    In other words, \eqref{2*3*2} helps visualize
    the subfield chain $\GF(4096) / \GF(64) / \GF(4) / \GF(2)$.

    The third basis of $\GF(4096) / \GF(2)$ we consider is
    \begin{equation}
        \bma{1 & \gamma} \otimes
        \bma{1 & \alpha} \otimes
        \bma{1 & \beta & \beta^2}
        \in \GF(4096)^{12}.                                     \label{2*2*3}
    \end{equation}
    Under this basis, the matrix representation of $\kappa$ is
    \begin{equation*}
        \rho_2^{12}(\kappa) =
        \vscale{\vscale{\left[
            \begin{array} {ccc|ccc||ccc|ccc}
                1 & 1 & 1 & 0 & 0 & 1 & 1 & 1 & 0 & 1 & 1 & 0 \\
                1 & 0 & 0 & 1 & 0 & 1 & 0 & 0 & 1 & 0 & 0 & 1 \\
                1 & 1 & 0 & 0 & 1 & 0 & 1 & 0 & 0 & 1 & 0 & 0 \\\hline
                0 & 0 & 1 & 1 & 1 & 0 & 1 & 1 & 0 & 0 & 0 & 0 \\
                1 & 0 & 1 & 0 & 0 & 1 & 0 & 0 & 1 & 0 & 0 & 0 \\
                0 & 1 & 0 & 1 & 0 & 0 & 1 & 0 & 0 & 0 & 0 & 0 \\\hline\hline
                0 & 0 & 0 & 1 & 1 & 0 & 0 & 0 & 1 & 1 & 1 & 1 \\
                0 & 0 & 0 & 0 & 0 & 1 & 1 & 0 & 1 & 1 & 0 & 0 \\
                0 & 0 & 0 & 1 & 0 & 0 & 0 & 1 & 0 & 1 & 1 & 0 \\\hline
                1 & 1 & 0 & 1 & 1 & 0 & 1 & 1 & 1 & 1 & 1 & 0 \\
                0 & 0 & 1 & 0 & 0 & 1 & 1 & 0 & 0 & 0 & 0 & 1 \\
                1 & 0 & 0 & 1 & 0 & 0 & 1 & 1 & 0 & 1 & 0 & 0
            \end{array}
        \right]}}.
    \end{equation*}
    Now using the prefixes of
    \eqref{2*2*3}---$\bma{1 & \gamma} \otimes \bma{1 & \alpha}$ and
    $\bma{1 & \gamma}$---we obtain
    \begin{equation*}
        \rho_8^4(\kappa) =
        \vscale{\left[
            \def\0{       0}
            \def\1{\beta^1}
            \def\5{\beta^5}
            \def\6{\beta^6}
            \begin{array} {cc|cc}
                \5 & \1 & \6 & \6 \\
                \1 & \6 & \6 & \0 \\\hline
                \0 & \6 & \1 & \5 \\
                \6 & \6 & \5 & \6
            \end{array}
        \right]}
        \qquad
        \rho_{64}^2(\kappa) =
        \bma{
            \epsilon^1    & \epsilon^{33} \\
            \epsilon^{12} & \epsilon^{29}
        }.
    \end{equation*}
    In particular,
    $
    \def\0{       0}
    \def\1{\beta^1}
    \def\6{\beta^6}
    \bsm{
        \0 & \6 \\
        \6 & \6
    }$
    is the lower-left corner of $\rho_8^4(\kappa)$, which is
    $\rho_8^2(\epsilon^{12})$, where $\epsilon^{12}$
    is the lower-left corner of $\rho_{64}^2(\kappa)$.
    In other words, \eqref{2*2*3} helps visualize
    the subfield chain $\GF(4096) / \GF(64) / \GF(8) / \GF(2)$.

    Summary of strategy:
    As there are three ways to factorize,
    $12 = 2 \cdot 2 \cdot 3 = 2 \cdot 3 \cdot 2 = 3 \cdot 2 \cdot 2$,
    there are three different subfield chains from $\GF(4096)$ to $\GF(2)$.
    Each chain corresponds to a different order
    of Kronecker products of the bases
    $\bma{1 & \alpha}$, $\bma{1 & \beta & \beta^2}$, and $\bma{1 & \gamma}$.
    Note that, in all three of \eqref{3*2*2}, \eqref{2*3*2}, and
    \eqref{2*2*3}, $\bma{1 & \gamma}$ always appears to the left of
    $\bma{1 & \alpha}$ because, from top to bottom,
    $\GF(16)$ always appears before $\GF(4)$.

\subsection{One more example on degree thirty}

    Before we prove Theorem~\ref{thm:main}, let us briefly go over
    degree $30$, a product of three distinct primes $2$, $3$, and $5$.
    Cf.\ \cite[Example~2.7]{LNC09}.
    \[
        \begin{tikzpicture} [y=1cm, x=2cm]
            \draw [nodes = {auto, sloped}]
                ({ln(2/3)}, {ln(30)}) node (F30) {$\GF(2^{30})$}
                ({ln(1/3)}, {ln(15)}) node (F15) {$\GF(32768)$}
                ({ln(2)}, {ln(10)}) node (F10) {$\GF(1024)$}
                ({ln(2/3)}, {ln(6)}) node (F6) {$\GF(64)$}
                ({ln(1)}, {ln(5)}) node (F5) {$\GF(32)$}
                ({ln(1/3)}, {ln(3)}) node (F3) {$\GF(8)$}
                ({ln(2)}, {ln(2)}) node (F2) {$\GF(4)$}
                ({ln(1)}, {ln(1)}) node (F1) {$\GF(2)$}
                (F30) -- node {$2$} (F15)
                (F30) -- node {$3$} (F10)
                (F30) -- node {$5$} (F6)
                (F15) -- node {$3$} (F5)
                (F15) -- node {$5$} (F3)
                (F10) -- node {$2$} (F5)
                (F10) -- node {$5$} (F2)
                (F6) -- node {$3$} (F2)
                (F6) -- node {$2$} (F3)
                (F5) -- node {$5$} (F1)
                (F3) -- node {$3$} (F1)
                (F2) -- node {$2$} (F1)
            ;
        \end{tikzpicture}
        \qquad
        \begin{tikzpicture} [y=1cm, x=2cm]
            \draw [nodes = {auto, sloped, '}]
                ({ln(2/3)}, {ln(30)}) node (F30) {$\bullet$}
                ({ln(1/3)}, {ln(15)}) node (F15) {$\bullet$}
                ({ln(2)}, {ln(10)}) node (F10) {$\bullet$}
                ({ln(2/3)}, {ln(6)}) node (F6) {$\epsilon$}
                ({ln(1)}, {ln(5)}) node (F5) {$\delta$}
                ({ln(1/3)}, {ln(3)}) node (F3) {$\beta$}
                ({ln(2)}, {ln(2)}) node (F2) {$\alpha$}
                ({ln(1)}, {ln(1)}) node (F1) {$1$}
                (F30) -- node {$32769$} (F15)
                (F30) -- node {$1049601$} (F10)
                (F30) -- node {$ $} (F6)
                (F15) -- node {$1057$} (F5)
                (F15) -- node {$4681$} (F3)
                (F10) -- node {$33$} (F5)
                (F10) -- node {$341$} (F2)
                (F6) -- node {$21$} (F2)
                (F6) -- node {$9$} (F3)
                (F5) -- node {$31$} (F1)
                (F3) -- node {$7$} (F1)
                (F2) -- node {$3$} (F1)
            ;
        \end{tikzpicture}
    \]
    Here $\delta$ generates $\GF(32)^*$.
    For this case, we use these bases
    \[
        A \coloneqq \bma{1 & \alpha}, \qquad
        B \coloneqq \bma{1 & \beta & \beta^2}, \qquad
        D \coloneqq \bma{1 & \delta & \delta^2 & \delta^3 & \delta^4}
    \]
    as building blocks.
    There are six ways to arrange $A$, $B$, and $D$:
    \begin{itemize}
        \item $A \otimes B \otimes D$ works for 
            $\GF(2^{30}) / \GF(32768) / \GF(32) / \GF(2)$.
        \item $A \otimes D \otimes B$ works for
            $\GF(2^{30}) / \GF(32768) / \GF(8) / \GF(2)$.
        \item $B \otimes A \otimes D$ works for
            $\GF(2^{30}) / \GF(1024) / \GF(32) / \GF(2)$.
        \item $B \otimes D \otimes A$ works for
            $\GF(2^{30}) / \GF(1024) / \GF(4) / \GF(2)$.
        \item $D \otimes A \otimes B$ works for
            $\GF(2^{30}) / \GF(64) / \GF(8) / \GF(2)$.
        \item $D \otimes B \otimes A$ works for
            $\GF(2^{30}) / \GF(64) / \GF(4) / \GF(2)$.
    \end{itemize}

\section{Proof of Theorem~\ref{thm:main}}                    \label{sec:main}

    The proof consists of several steps.
    Each step corresponds to a subsection below.

\paragraph{Step 1.}
    We show that a basis $N \in \GF(q^n)^n$ of $\GF(q^n) / \GF(q)$
    induces a matrix representation $\GF(q^n) \to \GF(q)^{n\times n}$
    that is an injective $\GF(q)$-algebra homomorphism and a field embedding.

\paragraph{Step 2.}
    We show that if $M \in \GF(q^{nm})^m$ is a basis chosen for
    $\GF(q^{nm}) / \GF(q^n)$, then $M \otimes N \in \GF(q^{nm})^{nm}$
    is a basis of $\GF(q^{nm}) / \GF(q)$ and induces the block
    structure of the maps we demonstrated in Section~\ref{sec:12}.

\paragraph{Step 3.}
    We describe how to construct the next basis when
    the incremental degree $m$ is a prime.

\paragraph{Step 4.}
    We show that the resulting basis of $\GF(q^{nm}) / \GF(q)$,
    up to permutation, does not depend on factorization.

\subsection{Matrix representation from a basis}

    Let $\NN \coloneqq \GF(q^n)$.
    For every $\xi \in \NN$,
    multiplication by $\xi$ defines a $\GF(q)$-linear map
    $\mu_\xi\colon \NN \to \NN$ by $\mu_\xi(\eta) = \xi\eta$.
    Let $N \coloneqq \bma{\nu_0 & \nu_1 & \cdots & \nu_{n-1}} \in \NN^n$
    be a basis of $\NN / \GF(q)$.
    Every linear map has a matrix representation once a basis is chosen:
    Let $\rho_N(\xi)$ be the matrix form of $\mu_\xi$ with respect to $N$.
    That is, if
    $\xi \nu_j = x_{0j} \nu_0 + \dotsc + x_{n-1,j} \nu_{n-1}$
    is how $\xi \nu_j$ is expressed in the basis $N$, then $x_{ij}$,
    for $0 \le i, j < n$, is the $(i, j)$-entry of $\rho_N(\xi)$.

    Multiplication in $\NN$ is
    distributive---$(\xi + \eta)\zeta = \xi\zeta + \eta\zeta$---so
    $\rho_N(\xi + \eta) = \rho_N(\xi) + \rho_N(\eta)$.
    Multiplication is also
    associative---$(\xi\eta)\zeta = \xi(\eta\zeta)$---so
    $\rho_N(\xi\eta) = \rho_N(\xi) \rho_N(\eta)$.
    We also have $\rho_N(1) = I_n$ because $1\nu_i = \nu_i$
    for every $0 \le i < n$.
    These three properties imply that $\rho_N$ is a
    ring homomorphism from $\NN$ to $\GF(q)^{n\times n}$.
    This homomorphism is nonzero,
    and a nonzero ring homomorphism from a field is injective.
    This ensures that $\rho_N$ is at least a field embedding.

    For a scalar $y \in \GF(q)$, multiplication by $y$
    sends every basis vector $\nu_i$ to $y \nu_i$
    with no cross components, and hence $\rho_N(y) = y \cdot I_n$.
    Therefore, for every $\xi \in \NN$, we see that $\rho_N(y \xi) =
    \rho_N(y) \rho_N(\xi) = (y \cdot I_n) \rho_N(\xi) = y \cdot \rho_N(\xi)$.
    Hence $\rho_N$ is an $\GF(q)$-algebra homomorphism.

\subsection{Representation from Kronecker product of bases}

    Let $\NN \coloneqq \GF(q^n)$ and $\MM \coloneqq \GF(q^{nm})$.
    Let $N \coloneqq \bma{\nu_0 & \cdots & \nu_{n-1}} \in \NN^n$
    be a basis of $\NN / \GF(q)$, and let
    $M \coloneqq \bma{\mu_0 & \cdots & \mu_{m-1}} \in \MM^m$
    be a basis of $\MM / \NN$.
    Observe that the Kronecker product
    \[
        M \otimes N =
        \bma{
            \mu_0 \nu_0 & \cdots & \mu_0 \nu_{n-1}
            & \bullet & \bullet & \bullet &
            \mu_{m-1} \nu_0 & \cdots & \mu_{m-1} \nu_{n-1}
        }
    \]
    forms a basis of $\MM / \GF(q)$.
    This is because every element of $\MM$ can be written
    as a linear combination of the $\mu_j$ with coefficients in $\NN$,
    and every coefficient in $\NN$ can be written as a
    linear combination of $\nu_i$ with coefficients in $\GF(q)$.

    To see the block structure explicitly, we use
    $0 \le \ii, \jj < m$ for the block indices and
    $0 \le i, j < n$ for the indices inside each block.
    Let $\rho_N$ be the matrix representation of $\NN / \GF(q)$ induced by
    $N$, and let $\rho_M$ be that of $\MM / \NN$ induced by $M$.
    For a fixed $\Xi \in \MM$, write
    $\rho_M(\Xi) = [\xi_{\ii\jj}]_{\ii\jj} \in \NN^{m\times m}$.
    By definition,
    \[ \Xi \mu_\jj = \sum_\ii \xi_{\ii\jj} \mu_\ii .  \]
    Also, for each coefficient $\xi_{\ii\jj}\in\NN$,
    the definition of $\rho_N$ says that
    \[ \xi_{\ii\jj}\nu_j = \sum_i \rho_N(\xi_{\ii\jj})_{ij}\nu_i. \]
    Chaining these two leads to 
    \[
        (\Xi\mu_\jj) \nu_j
        = \sum_\ii (\xi_{\ii\jj} \mu_\ii) \nu_j
        = \sum_\ii \mu_\ii (\xi_{\ii\jj} \nu_j)
        = \sum_\ii \mu_\ii \sum_i \rho_N(\xi_{\ii\jj})_{ij}  \nu_i .
    \]
    Now write
    $\rho_{M\otimes N}(\Xi) = [x_{\ii n+i,\jj n+j}]_{\ii n+i,\jj n+j}
    \in \GF(q)^{nm\times nm}$, i.e.,
    \[
        \Xi (\mu_\jj \nu_j)
        = \sum_{\ii n+i} x_{\ii n+i,\jj n + j} \mu_\ii \nu_i.
    \]
    Comparing how $\Xi$ acts on $\mu_\jj \nu_j$ in the two ways above,
    we see that
    $x_{\ii n+i,\jj n+j} = \bigl(\rho_N(\xi_{\ii\jj})\bigr)_{ij}$.
    This implies that $\rho_{M\otimes N}$ possesses
    the block structure \eqref{block} required in Theorem~\ref{thm:main},
    provided that the bases involved possess the Kronecker-product structure.

\subsection{The bases for prime degree extensions}

    It remains to explain which bases we choose.
    Fix a prime $p$.
    Fix, once and for all, compatible generators for the fields $\GF(p^k)$:
    For every $k$, let $\omega_k$ generate $\GF(p^k)^*$, and require
    that $\omega_k^{(p^k-1)/(p^d-1)} = \omega_d$ whenever $d \mid k$.
    This is the compatibility supplied by Conway polynomials, or by any
    compatible primitive system of defining polynomials.

    Suppose we have already constructed a basis of
    $\GF(p^d) / \GF(p)$, and we want to extend from
    $\GF(p^d)$ to $\GF(p^k)$, where $r \coloneqq k/d$ is prime.
    Write $d = r^s t$ with $\gcd(r, t) = 1$,
    i.e., $d$ already contains $s$ copies of $r$ and $k$ contains one more.
    Let $\sigma \coloneqq \omega_{r^{s+1}} \in \GF(p^{r^{s+1}})$.
    Then we have the following diamond.
    \begin{equation*}
        \begin{tikzpicture} [y=1cm, x=2cm]
            \tikzset{baseline={([yshift=-.5ex]current bounding box.center)}}
            \draw [nodes = {auto, sloped}]
                ({ln(2/3)}, {ln(6)}) node (E) {$\GF(p^k)$}
                ({ln(1/3)}, {ln(3)}) node (B) {$\GF(p^d)$}
                ({ln(2)}, {ln(2)}) node (A)
                {\kern1em $\GF(p^{r^{s+1}}) \ni \sigma$}
                (0, 0) node (1) {$\GF(p^{r^s})$}
                (E) -- node {$r$} (B)
                (E) -- node {$t$} (A)
                (B) -- node {$t$} (1)
                (A) -- node {$r$} (1)
            ;
        \end{tikzpicture}
    \end{equation*}
    The intersection of the two middle fields is the bottom field,
    and the compositum of the two middle fields is the top field.
    In particular,
    \[
        \GF(p^d) \cap \GF(p^{r^{s+1}}) = \GF(p^{\gcd=r^s}), \qquad
        \GF(p^d)[\sigma] = \GF(p^{\lcm=k}).
    \]
    Hence $\sigma$ has degree $r$ over $\GF(p^d)$, and
    $\bma{1 & \sigma & \cdots & \sigma^{r-1}}$
    is a basis of $\GF(p^k) / \GF(p^d)$.
    
    This prime-degree step has appeared multiple times before.
    For instance, $\bma{1 & \beta & \beta^2}$
    is used for extensions of degree $3$ in \eqref{3*2},
    \eqref{2*3}, \eqref{3*2*2}, \eqref{2*3*2}, and \eqref{2*2*3}.
    $\bma{1 & \alpha}$ is used for extensions of degree $2$
    when it is the lowest extension of degree $2$ in the chain;
    when it is not, $\bma{1 & \gamma}$ is used instead.

\subsection{The independence of the factorization}

    In the previous subsection we declared that each extension of prime
    degree uses a basis of the form $\bma{1 & \sigma & \cdots & \sigma^{r-1}}$.
    Two subsections ago we also clarified that the Kronecker product
    gives a basis for the compositum of two extensions,
    and the block structure follows.
    It remains to explain why the Kronecker product of the bases
    does not depend on the order of the prime factors.

    The fundamental reason is that changing the order of a Kronecker product
    only permutes the entries of the resulting basis vector,
    so the only thing that matters is the \emph{multiset} of bases
    whose Kronecker product we take, not the order in which we take it.
    Now, the first time a prime $r$ appears in the chain, it contributes
    $\bma{1 & \omega_r & \cdots & \omega_r^{r-1}}$.
    If $r^2$ divides $k$, then the second occurrence of $r$ contributes
    $\bma{1 & \omega_{r^2} & \cdots & \omega_{r^2}^{r-1}}$.
    If even $r^3$ divides $k$, then the third occurrence contributes
    $\bma{1 & \omega_{r^3} & \cdots & \omega_{r^3}^{r-1}}$,
    and so on.
    This confirms that the bases are algorithmically determined
    by the prime powers $r^a$ dividing $k$,
    rather than by the order in which the prime factors are adjoined.
    This finishes the proof of Theorem~\ref{thm:main}.

\subsection{Consequences of the main theorem}

    Because the maps $\rho_q^n$ are, by construction, matrix representations
    of the $\GF(q)$-linear transformations, the field trace and field norm
    are simply the matrix trace and matrix determinant, respectively.

    \begin{corollary} [trace and norm]
        The $\rho_q^n$ described in Theorem~\ref{thm:main} satisfy
        \[
            \tr(\xi) = \xi + \xi^q + \dotsb + \xi^{q^{n-1}}
            = \tr(\rho_q^n(\xi)) \in \GF(q),
        \]
        where the left-hand side is the field trace from $\GF(q^n)$ and
        the right-hand side is the matrix trace from $\GF(q)^{n\times n}$.
        Similarly,
        \[
            \norm(\xi) = \xi \cdot \xi^q \dotsm \xi^{q^{n-1}}
            = \det(\rho_q^n(\xi)) \in \GF(q),
        \]
        where the left-hand side is the field norm from $\GF(q^n)$ and the
        right-hand side is the matrix determinant from $\GF(q)^{n\times n}$.
    \end{corollary}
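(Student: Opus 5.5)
Both identities are instances of one principle: for any $\xi \in \GF(q^n)$, the matrix $\rho_q^n(\xi)$ is the matrix of the $\GF(q)$-linear map $\mu_\xi$ in some basis (Step~1). Matrix trace and determinant are similarity invariants, so they depend only on $\mu_\xi$ and not on the basis $N$ or on any row and column permutations. It therefore suffices to show, for the linear map $\mu_\xi$ on the $n$-dimensional space $\GF(q^n)$, that
\[
    \tr(\mu_\xi) = \xi + \xi^q + \dotsb + \xi^{q^{n-1}}
    \quad\text{and}\quad
    \det(\mu_\xi) = \xi \cdot \xi^q \dotsm \xi^{q^{n-1}}.
\]
The strategy is to compute the characteristic polynomial of $\mu_\xi$ and read off both quantities from its coefficients.

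First I treat the case where $\xi$ generates $\GF(q^n)$ over $\GF(q)$, i.e.\ its minimal polynomial $g$ has degree $n$. I will use the power basis $1, \xi, \dots, \xi^{n-1}$. In this basis $\mu_\xi$ is the companion matrix of $g$, exactly as in \eqref{companion}, so $\charpoly(\mu_\xi) = g$. The roots of $g$ in $\GF(q^n)$ are the distinct Frobenius conjugates $\xi, \xi^q, \dots, \xi^{q^{n-1}}$. This is the standard fact that the Frobenius map permutes the roots of an irreducible polynomial over $\GF(q)$, and that these $n$ conjugates are distinct because the degree is $n$. Hence
\[
    g(x) = \prod_{i<n} (x - \xi^{q^i}).
\]
The trace is the negative of the $x^{n-1}$ coefficient, which gives the sum of the conjugates. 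The determinant is $(-1)^n g(0)$, which gives their product.

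Next I handle the general case, and I expect this to be the main obstacle. Here $\xi$ lies in a subfield $\GF(q^d)$ with $d \mid n$, and its minimal polynomial $g$ has degree $d$. I will choose a basis of $\GF(q^n)/\GF(q)$ of the form $M \otimes N$, where $N = [1, \xi, \dots, \xi^{d-1}]$ is a basis of $\GF(q^d)/\GF(q)$ and $M$ is a basis of $\GF(q^n)/\GF(q^d)$. By the block computation of Step~2, $\mu_\xi$ in this basis is block diagonal with $n/d$ copies of the companion matrix of $g$. This holds because $\rho_M(\xi) = \xi I$, as $\xi$ is a scalar of the base field $\GF(q^d)$. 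Therefore
\[
    \charpoly(\mu_\xi) = g^{n/d}.
\]
On the field side, the list $\xi^{q^i}$ for $0 \le i < n$ runs through the $d$ conjugates of $\xi$, each repeated $n/d$ times, since $\xi^{q^d} = \xi$. So $\prod_{i<n}(x - \xi^{q^i}) = g^{n/d}$ as well, and comparing coefficients again gives both identities.

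Finally, both sides land in $\GF(q)$. The right-hand sides are entries of matrices over $\GF(q)$, so their trace and determinant lie in $\GF(q)$. This also shows that the field trace and norm lie in $\GF(q)$, which one can alternatively see because both are fixed by $x \mapsto x^q$.
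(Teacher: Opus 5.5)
Your proof is correct, but it takes a more self-contained route than the paper. The paper's entire justification is one sentence. It observes that $\rho_q^n(\xi)$ is, by construction (Step~1), the matrix of the $\GF(q)$-linear map $\mu_\xi$, and then invokes the standard fact that the field trace and norm of $\xi$ equal the trace and determinant of $\mu_\xi$.

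You instead prove that standard fact. You use basis-invariance of trace and determinant to move to the convenient basis $M \otimes N$, where $N$ is a power basis of $\xi$ over $\GF(q)$. Step~1 gives $\rho_M(\xi) = \xi I$, and the block formula of Step~2 then shows that $\mu_\xi$ is block diagonal with $n/d$ companion matrices of $g = \minpoly(\xi)$. Finally you compare $g^{n/d}$ with $\prod_{i<n}(x - \xi^{q^i})$.

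The paper's version is shorter, but it leans on exactly the identification that carries the content once trace and norm are defined as sums and products of Frobenius conjugates. Your version costs a few more lines and needs only two ingredients: the companion-matrix computation, and the fact that the conjugates of a degree-$d$ element are the $d$ distinct $\xi^{q^i}$. It also reuses the paper's own Step~1 and Step~2 machinery. As a byproduct it gives $\charpoly(\rho_q^n(\xi)) = \minpoly(\xi)^{n/d}$, which is the characteristic-polynomial half of the next corollary.

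One small streamlining: your first case is just the second with $d = n$ and $M = [1]$, so you do not need to split into two cases.
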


    Trace and norm/determinant are coefficients
    of the characteristic polynomials, so it is not
    a surprise that the previous corollary generalizes.

    \begin{corollary} [minimal and characteristic polynomial]
        The $\rho_q^n$ described in Theorem~\ref{thm:main} satisfy
        \[ \minpoly(\xi) = \minpoly(\rho_q^n(\xi)) \in \GF(q)[x], \]
        where the left-hand side is the minimal polynomial
        for field extensions and the right-hand side
        is the minimal polynomial for matrices.
        Similarly,
        \[ \charpoly(\xi) = \charpoly(\rho_q^n(\xi)) \in \GF(q)[x], \]
        where the left-hand side is the characteristic polynomial
        for field extensions and the right-hand side
        is the characteristic polynomial for matrices.
    \end{corollary}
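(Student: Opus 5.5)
The plan is to reduce both identities to standard linear algebra, using that $\rho_q^n$ is the matrix of the $\GF(q)$-linear map $\mu_\xi\colon \eta \mapsto \xi\eta$ on $\GF(q^n)$ with respect to some basis (Step~1 of the proof of Theorem~\ref{thm:main}). The field characteristic polynomial $\charpoly(\xi)$ is by definition $\det(xI - \mu_\xi)$, and a change of basis conjugates the matrix. So $\charpoly(\xi) = \charpoly(\rho_q^n(\xi))$ holds automatically, whatever basis the construction picked. In particular, neither the Kronecker structure nor the permutations in Theorem~\ref{thm:main} matter here.

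For the minimal polynomial, I would use that $\rho_q^n$ is an injective $\GF(q)$-algebra homomorphism. For any $g \in \GF(q)[x]$ we have $\rho_q^n(g(\xi)) = g(\rho_q^n(\xi))$, because $\rho_q^n$ respects sums and products and sends scalars $y$ to $yI_n$. Injectivity then gives $g(\xi) = 0$ if and only if $g(\rho_q^n(\xi)) = 0$. Hence the two annihilating ideals in $\GF(q)[x]$ coincide, and so do their monic generators, which proves $\minpoly(\xi) = \minpoly(\rho_q^n(\xi))$.

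To tie the characteristic polynomial back to the minimal one, I would also check that $\charpoly(\xi) = \minpoly(\xi)^{n/d}$ with $d = [\GF(q)(\xi):\GF(q)]$. For this, choose a basis of $\GF(q^n)$ of the form $\{\xi^a \theta_b\}$, where $\{\theta_b\}$ is a basis of $\GF(q^n)/\GF(q)(\xi)$. In this basis $\mu_\xi$ is block diagonal with $n/d$ copies of the companion matrix of $\minpoly(\xi)$, as in \eqref{companion}. By the basis-invariance above, this identity transfers to $\rho_q^n(\xi)$.

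The only step needing care is the identification of the field-theoretic charpoly with the root product $\prod_{i}(x - \xi^{q^i})$, in case that is taken as the definition. The plan is to use that $\minpoly(\xi)$ has roots $\xi, \xi^q, \dotsc, \xi^{q^{d-1}}$, and that the list $\xi^{q^i}$ for $0 \le i < n$ repeats this orbit $n/d$ times. Both descriptions then equal $\minpoly(\xi)^{n/d}$.
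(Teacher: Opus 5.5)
Your proposal is correct and matches the paper's proof. The minimal polynomials agree because the injective $\GF(q)$-algebra homomorphism $\rho_q^n$ commutes with polynomial evaluation, and the characteristic polynomials agree because $\rho_q^n(\xi)$ is the matrix of $\mu_\xi$. Your extra check that $\charpoly(\xi)=\minpoly(\xi)^{n/d}$, which also equals the Frobenius root product, is not in the paper (it simply takes $\det(xI-\mu_\xi)$ as the definition), but it is correct and covers the alternative definition.
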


    \begin{proof}
        Since $\rho_q^n$ is an injective $\GF(q)$-algebra homomorphism,
        every polynomial $f(x) \in \GF(q)[x]$ commutes with it:
        $\rho_q^n(f(\xi)) = f(\rho_q^n(\xi))$.
        Thus $f(\xi) = 0$ if and only if $f(\rho_q^n(\xi)) = 0 \cdot I_n$,
        which proves the statement for minimal polynomials.
        The characteristic polynomial of $\xi$ over $\GF(q)$ is the
        characteristic polynomial of the $\GF(q)$-linear map
        $\mu_\xi\colon \eta \mapsto \xi\eta$.
        Since $\rho_q^n(\xi)$ is the matrix form of $\mu_\xi$,
        the characteristic polynomials also coincide.
    \end{proof}

    Interesting things happen when $\xi$ is in the base field $\GF(q)$.
    If $\xi \in \GF(q)$, the minimal polynomial of $\xi$
    has degree one: $\minpoly(\xi) = x - \xi$.
    This implies that the matrix $\rho_q^n(\xi)$ must also satisfy
    $x - \xi = 0$, which leads to $\rho_q^n(\xi) - \xi \cdot I_n = 0$,
    where $I_n$ is the $n \times n$ identity matrix.
    Now combine this fact with the block structure:
    For $\alpha \in \GF(4)$, we have
    \[
        \rho_4^3(\alpha) = \bma{\alpha \\& \alpha \\&& \alpha}
        \in \GF(4)^{3\times3}.
    \]
    We then apply $\rho_2^2$ to the resulting matrix to get
    \[
        \rho_2^6(\alpha) = \rho_2^2(\rho_4^3(\alpha)) =
        \bma{\rho_2^2(\alpha) \\& \rho_2^2(\alpha) \\&& \rho_2^2(\alpha)}
        \in \GF(2)^{6\times6}.
    \]
    In other words, diagonal matrices correspond to base-field elements,
    while block-diagonal matrices correspond to subfield elements.
    The size of the blocks reveals
    the smallest subfield containing the element.
    This is exactly what \eqref{I_3} and \eqref{I_2} want to demonstrate.

    \begin{corollary} [block diagonal]
        If $\xi$ is in $\GF(q^n)$ but treated as an element of $\GF(q^{nm})$,
        then, up to permutations,
        \[
            \def\ddot{\raisebox{0.6ex}{$.$}.}
            \rho_q^{nm}(\xi) =
            \bma{\rho_q^n(\xi) \\& \ddot \\&& \rho_q^n(\xi)}
            \in \GF(q)^{nm\times nm}                         
        \]
        Conversely, if $\rho_q^{nm}(\eta)$ is block diagonal
        with block size $n$ (the diagonal blocks do not need to
        contain the same content), then $\eta \in \GF(q^n)$.
    \end{corollary}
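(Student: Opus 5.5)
The plan has two directions. The forward direction combines the block structure \eqref{block} from Theorem~\ref{thm:main} with the observation already made above for base-field elements. The converse is a commutation argument using the extension $\GF(q^{nm})/\GF(q^n)$.

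\emph{Forward direction.} By Theorem~\ref{thm:main}, up to the same row and column permutation,
\[
\rho_q^{nm}(\xi) = \bigl[\rho_q^n(\xi_{\ii\jj})\bigr]_{\ii\jj}, \qquad \text{where } \rho_{q^n}^m(\xi) = [\xi_{\ii\jj}]_{\ii\jj}.
\]
Since $\xi \in \GF(q^n)$ is a scalar for the extension $\GF(q^{nm})/\GF(q^n)$, Step~1 (the $\GF(q^n)$-algebra property of $\rho_{q^n}^m$) gives $\rho_{q^n}^m(\xi) = \xi I_m$. So $\xi_{\ii\ii} = \xi$ and $\xi_{\ii\jj} = 0$ for $\ii \ne \jj$. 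Applying $\rho_q^n$ entrywise, and using $\rho_q^n(0) = 0$, yields the block-diagonal matrix with $m$ copies of $\rho_q^n(\xi)$.

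\emph{Converse.} Work in the permuted coordinates, i.e.\ with the basis $M \otimes N$. Assume $X \coloneqq \rho_{M\otimes N}(\eta)$ is block diagonal with $n\times n$ blocks $D_0,\dots,D_{m-1}$. The aim is to show that $\rho_{M\otimes N}$ restricted to $\GF(q^n)$ has image $\{\operatorname{diag}(Y,\dots,Y) : Y \in \rho_q^n(\GF(q^n))\}$, and that $X$ commutes with all of these. Commutation is automatic, since $\GF(q^{nm})$ is commutative and $\rho$ is a homomorphism. Blockwise, commuting with $\operatorname{diag}(Y,\dots,Y)$ means each $D_\ii$ commutes with every $Y = \rho_q^n(\zeta)$. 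Taking $\zeta$ a generator of $\GF(q^n)$ over $\GF(q)$, the matrix $\rho_q^n(\zeta)$ has irreducible characteristic polynomial of degree $n$, so it is cyclic. Its centralizer is therefore $\GF(q)[\rho_q^n(\zeta)] = \rho_q^n(\GF(q^n))$, and hence $D_\ii = \rho_q^n(\zeta_\ii)$ for some $\zeta_\ii \in \GF(q^n)$.

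\emph{Main obstacle.} The remaining step, which I expect to be the main one, is to force all $\zeta_\ii$ to be equal and to conclude. Rewrite $X$ as $\rho_{q^n}^m(\eta) = \operatorname{diag}(\zeta_0,\dots,\zeta_{m-1}) \in \GF(q^n)^{m\times m}$. This says $\eta \mu_\ii = \zeta_\ii \mu_\ii$ for each basis vector $\mu_\ii$ of $M$. Dividing by $\mu_\ii \ne 0$ in the field $\GF(q^{nm})$ gives $\eta = \zeta_\ii \in \GF(q^n)$, so all $\zeta_\ii$ coincide and $\eta \in \GF(q^n)$. This also explains the parenthetical in the statement: the diagonal blocks are a priori allowed to differ, and the field structure forces them to be equal.

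Rather than the centralizer argument, one could get $D_\ii = \rho_q^n(\zeta_\ii)$ more directly. Write $\eta \mu_\ii = \sum_{\jj} \xi_{\jj\ii} \mu_\jj$ with $\xi_{\jj\ii} \in \GF(q^n)$, which uses $\GF(q^n)$-coordinates for $M$. By \eqref{block}, the off-diagonal blocks of $X$ are $\rho_q^n(\xi_{\jj\ii})$, and they vanish; since $\rho_q^n$ is injective, $\xi_{\jj\ii} = 0$ for $\jj \ne \ii$. This route avoids the centralizer lemma altogether, so it is the one I would write.
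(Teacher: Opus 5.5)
Your proof is correct. The forward direction is the paper's argument: the block structure \eqref{block} combined with $\rho_{q^n}^m(\xi)=\xi I_m$, which comes from the $\GF(q^n)$-algebra property.

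For the converse you take a genuinely different route. The paper uses only the diagonal blocks. By \eqref{block} each one is $Y_i=\rho_q^n(y_i)$, so $Y_i^{q^n}=Y_i$. Block diagonality then gives $\rho_q^{nm}(\eta)^{q^n}=\rho_q^{nm}(\eta)$, and injectivity gives $\eta^{q^n}=\eta$. The proof then concludes via the fact that the roots of $x^{q^n}-x$ in $\GF(q^{nm})$ are exactly $\GF(q^n)$. You use the off-diagonal blocks instead. Injectivity of $\rho_q^n$ turns their vanishing into $\rho_{q^n}^m(\eta)$ being diagonal. Then $\eta\mu_\ii=\xi_{\ii\ii}\mu_\ii$ with $\mu_\ii\neq 0$ gives $\eta=\xi_{\ii\ii}$ directly.

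What your route buys:
\begin{itemize}
\item It is purely linear-algebraic, with no Frobenius or fixed-field fact needed.
\item It identifies $\eta$ explicitly.
\item It shows at once that all diagonal blocks equal $\rho_q^n(\eta)$, which explains the parenthetical; in the paper's argument this only follows afterwards from the forward direction.
\item It shows that vanishing off-diagonal blocks in a single block column already suffice.
\end{itemize}
The paper's route stays at the level of matrix identities and never goes back to the $\GF(q^n)$-coordinates $\xi_{\ii\jj}$, which fits the note's emphasis on Frobenius.

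As you noticed yourself, your centralizer detour is correct but redundant. \eqref{block} already places every block in the image of $\rho_q^n$, and that is exactly what the paper invokes.
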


    \begin{proof}
        The forward direction is a direct consequence of
        the block structure \eqref{block} and
        $\rho_{q^n}^m$ being an $\GF(q^n)$-algebra homomorphism.
        For the backward direction, consider
        \[
            \def\ddot{\raisebox{0.6ex}{$.$}.}
            \rho_q^{nm}(\eta) =
            \bma{Y_1 \\& \ddot \\&& Y_m}
            \in (\GF(q)^{n\times n})^{m\times m},
        \]
        where each $Y_i$ is an $n \times n$ matrix.
        By the block structure, each $Y_i$ is of the form
        $\rho_q^n(y_i)$ for some $y_i \in \GF(q^n)$,
        and hence $Y_i^{q^n} = Y_i$.
        This forces $\rho_q^{nm}(\eta)^{q^n} = \rho_q^{nm}(\eta)$,
        and hence $\eta^{q^n} = \eta$, leading to $\eta \in \GF(q^n)$.
    \end{proof}

\section{Representing the Frobenius Map}                \label{sec:frobenius}

    One nontrivial fact that was not mentioned in the preliminaries
    is that the Galois group is cyclically generated by the Frobenius map.

    \begin{fact} [Frobenius map]
        Fix a field extension $\FF_{q^n}/\FF_q$.
        The map $\varphi_q\colon \xi \mapsto \xi^q$
        is called the \emph{Frobenius map}.
        It is a field automorphism on $\FF_{q^n}$,
        has order $n$,
        fixes $\FF_q$ and nothing else,
        and generates the Galois group of $\FF_{q^n} / \FF_q$.
        See \cite[Theorem~2.21]{LNC09} for a proof.
    \end{fact}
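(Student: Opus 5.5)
The plan is to verify the listed properties of $\varphi_q$ one at a time, using only elementary facts: the characteristic of the field, Lagrange's theorem on $\GF(q)^*$, the bound on the number of roots of a polynomial over a field, and the cyclicity of $\GF(q^n)^*$ from the multiplicative group fact. Throughout, write $q = p^k$ with $p$ prime. Since $\GF(p)$ is the prime subfield, the characteristic of $\GF(q^n)$ is $p$.

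\emph{Automorphism.} Multiplicativity, $(\xi\eta)^q = \xi^q\eta^q$, follows from commutativity. For additivity I first show $(\xi+\eta)^p = \xi^p + \eta^p$. The binomial coefficients $\binom{p}{i}$ with $0<i<p$ are divisible by $p$, so they vanish in characteristic $p$. Iterating this $k$ times gives $(\xi+\eta)^q = \xi^q + \eta^q$. Thus $\varphi_q$ is a nonzero ring homomorphism from a field, hence injective. Since $\GF(q^n)$ is finite, $\varphi_q$ is also surjective and so an automorphism.

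\emph{Fixed field.} Every nonzero $y \in \GF(q)$ satisfies $y^{q-1} = 1$ by Lagrange's theorem in $\GF(q)^*$, so $y^q = y$; the case $y = 0$ is trivial. Hence $\varphi_q$ fixes $\GF(q)$. Conversely, every fixed point is a root of $x^q - x$. This polynomial has at most $q$ roots in the field $\GF(q^n)$, and $\GF(q)$ already supplies $q$ of them. So the fixed set is exactly $\GF(q)$.

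\emph{Order $n$.} For every $\xi$ we have $\varphi_q^n(\xi) = \xi^{q^n} = \xi$, by the same Lagrange argument applied in $\GF(q^n)^*$. Now suppose $\varphi_q^j = \mathrm{id}$ for some $0 < j < n$. Then all $q^n$ elements would be roots of $x^{q^j} - x$, which has fewer than $q^n$ roots, a contradiction. So the order is exactly $n$.

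\emph{Generates the Galois group.} This is the step I expect to need the most care. The key is the upper bound $|\mathrm{Gal}(\GF(q^n)/\GF(q))| \le n$. Let $\omega$ be a primitive element of $\GF(q^n)$, which exists by the multiplicative group fact. An automorphism fixing $\GF(q)$ is determined by its value on $\omega$, because every nonzero element is a power of $\omega$. That value must be a root of $\minpoly(\omega) \in \GF(q)[x]$. Since $\GF(q)[\omega] = \GF(q^n)$, this minimal polynomial has degree $n$ and hence at most $n$ roots. Therefore the Galois group has at most $n$ elements. It also contains the cyclic subgroup $\langle \varphi_q \rangle$, which has order $n$ by the previous step. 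So the two groups coincide, and the Galois group is cyclic, generated by $\varphi_q$.
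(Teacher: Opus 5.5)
Your proof is correct. The paper gives no argument of its own for this fact and only cites \cite[Theorem~2.21]{LNC09}, and your proof is essentially the standard one found there: the $p$-th power map is additive in characteristic $p$, and any automorphism over $\GF(q)$ is determined by where it sends a primitive element $\omega$, which must be a root of the degree-$n$ polynomial $\minpoly(\omega)$. The one difference is cosmetic: you finish by counting ($|\mathrm{Gal}| \le n$ against $|\langle\varphi_q\rangle| = n$, with the powers of $\varphi_q$ distinct by the root bound for $x^{q^j}-x$), whereas the reference matches each automorphism directly with some $\xi \mapsto \xi^{q^j}$, using that the roots of $\minpoly(\omega)$ are exactly the conjugates $\omega^{q^j}$.
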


    Since $\varphi_q$ has order $n$, it would be interesting
    to find a basis of $\FF_{q^n} / \FF_q$ such that
    the matrix representation $\varrho$ turns $\varphi_q$ into
    an action on matrices that is ``obviously'' cyclic.
    To demonstrate what we mean by that, consider the following
    representation of $\GF(8)$ different from \eqref{GF8}:
    \begin{equation*}
        \left\{
            \arraycolsep2.4pt
            \bsm{0 & 0 & 0 \\ 0 & 0 & 0 \\ 0 & 0 & 0},
            \bsm{1 & 1 & 0 \\ 1 & 1 & 1 \\ 0 & 1 & 0},
            \bsm{0 & 0 & 1 \\ 0 & 1 & 1 \\ 1 & 1 & 1},
            \bsm{0 & 1 & 0 \\ 1 & 0 & 1 \\ 0 & 1 & 1},
            \bsm{1 & 1 & 1 \\ 1 & 0 & 0 \\ 1 & 0 & 1},
            \bsm{0 & 1 & 1 \\ 1 & 1 & 0 \\ 1 & 0 & 0},
            \bsm{1 & 0 & 1 \\ 0 & 0 & 1 \\ 1 & 1 & 0},
            \bsm{1 & 0 & 0 \\ 0 & 1 & 0 \\ 0 & 0 & 1}
        \right\}
        \subset \GF(2)^{3\times3}
    \end{equation*}
    Call these matrices $0, B^1, \dotsc, B^7$.
    Now, apart from $B^i \cdot B^j = B^{(i+j)\%7}$, one also observes that
    \[ (B^i)^2 = P B^i P^\top, \qquad P \coloneqq \bsm{&&1 \\ 1&& \\ &1&}. \]
    That is to say, squaring the matrices is equivalent to shifting
    the rows and columns by the permutation $\psm{1 & 2 & 3 \\ 2 & 3 & 1}$.
    Since this is a cyclic permutation on three items,
    $(((B^i)^2)^2)^2 = P^3 B^i P^{\top3} = B^i$
    witnesses the fact that the Frobenius map has order $3$.
    Moreover, notice that the only matrices invariant under
    $P \bullet P^\top$ are the scalar matrices $0$ and $B^7$,
    which form a copy of $\GF(2)$ inside $\GF(8)$.
    This witnesses the fact that the Frobenius map
    fixes the base field and nothing else.

    The question is whether we can always
    represent the Frobenius map like this.

    \begin{fact} [normal basis]
        An element $\nu \in \GF(q^n)$ is said to be \emph{normal}
        over $\GF(q)$ if $\nu, \nu^q, \dotsc, \nu^{q^{n-1}}$
        form a basis of $\FF_{q^n} / \FF_q$.
        There always exists a normal element for any
        finite field extension \cite[Theorem~2.35]{LNC09}.
        In fact, there exists a primitive normal element
        whose trace is any prescribed nonzero value
        in the base field \cite{CoH99}.
        See \cite{KaR19,MKB25} for more recent results.
    \end{fact}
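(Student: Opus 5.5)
The plan is to view the Frobenius map $\varphi_q$ as a $\GF(q)$-linear operator on the $n$-dimensional space $\GF(q^n)$ and reduce the normal basis fact to linear algebra. An element $\nu$ is normal exactly when $\nu, \varphi_q\nu, \dotsc, \varphi_q^{n-1}\nu$ are linearly independent. This holds exactly when no nonzero polynomial $g \in \GF(q)[x]$ of degree less than $n$ satisfies $g(\varphi_q)\nu = 0$, i.e., when $\nu$ is a \emph{cyclic vector} for $\varphi_q$. So the proof has two steps: show that $\varphi_q$ has minimal polynomial $x^n - 1$, and show that an operator whose minimal polynomial has degree equal to the dimension has a cyclic vector.

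\textbf{Step 1: the minimal polynomial of $\varphi_q$ is $x^n - 1$.} By the Frobenius fact, $\varphi_q^n = \mathrm{id}$, so the minimal polynomial divides $x^n - 1$. Suppose $g(x) = \sum_{i<n} c_i x^i \in \GF(q)[x]$ is nonzero with $g(\varphi_q) = 0$. Then the polynomial
\[
    \textstyle L(x) = \sum_{i<n} c_i x^{q^i}
\]
vanishes at every $\xi \in \GF(q^n)$. However, $L$ is nonzero of degree at most $q^{n-1} < q^n$, so it cannot have $q^n$ roots, a contradiction. Hence the minimal polynomial has degree $n$ and equals $x^n - 1$. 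Since this is also the dimension, it coincides with the characteristic polynomial.

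\textbf{Step 2: existence of a cyclic vector.} This is the step I expect to be the main obstacle, since it is genuine module theory rather than a counting argument. Factor $x^n - 1 = \prod_j g_j^{e_j}$ into powers of distinct monic irreducibles over $\GF(q)$. By the primary decomposition, $\GF(q^n)$ splits as a direct sum of $\varphi_q$-invariant subspaces $V_j = \ker g_j(\varphi_q)^{e_j}$. The minimal polynomial of $\varphi_q$ restricted to $V_j$ is exactly $g_j^{e_j}$: it divides $g_j^{e_j}$, and the product of these restricted minimal polynomials must recover $x^n - 1$.

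Hence for each $j$ we can pick $v_j \in V_j$ with $g_j(\varphi_q)^{e_j - 1} v_j \ne 0$. The annihilator of $v_j$ is then precisely $g_j^{e_j}$, because it divides $g_j^{e_j}$ but does not divide $g_j^{e_j-1}$. Set $\nu = \sum_j v_j$. If $h(\varphi_q)\nu = 0$, then $h(\varphi_q) v_j = 0$ for every $j$, because the $V_j$ are invariant and independent. So each $g_j^{e_j}$ divides $h$, hence $x^n - 1$ divides $h$. Thus no nonzero polynomial of degree less than $n$ kills $\nu$, and $\nu$ is normal.

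In characteristic $p \mid n$ the factors $g_j$ appear with multiplicity $e_j > 1$. This is precisely why one needs the power $g_j^{e_j}$ rather than just $g_j$, and it is where care is required. The refinements cited (primitive normal elements with prescribed trace) I would not reprove; they rely on character-sum estimates as in \cite{CoH99}.
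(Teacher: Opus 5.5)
Your proof of the existence part is correct. The paper itself proves nothing here: it cites \cite[Theorem~2.35]{LNC09} for existence and \cite{CoH99} for the primitive-with-prescribed-trace refinement. What you have written is essentially the standard textbook argument behind the first citation. You view $\varphi_q$ as a $\GF(q)$-linear operator on $\GF(q^n)$, show its minimal polynomial is $x^n-1$, and then use the primary decomposition to build a vector whose $\varphi_q$-annihilator is that whole polynomial.

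The one genuine variation is your Step~1. The textbook route typically obtains the linear independence of $\mathrm{id}, \varphi_q, \dotsc, \varphi_q^{n-1}$ from Artin's lemma on the independence of distinct characters. You instead count roots of the $q$-polynomial $\sum_{i<n} c_i x^{q^i}$, whose degree is at most $q^{n-1} < q^n$, which is more elementary and specific to finite fields. Your handling of repeated factors when the characteristic divides $n$ is exactly what is needed: choosing $v_j$ with $g_j(\varphi_q)^{e_j-1} v_j \ne 0$ makes its annihilator exactly $g_j^{e_j}$.

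Your route buys a self-contained proof using only the linear algebra the paper already relies on, which suits its pedagogical aim. The citation buys the full strength of the statement, since your construction gives no control over primitivity or over the trace of $\nu$. Deferring that refinement to \cite{CoH99} puts you on the same footing as the paper, so it is not a gap relative to it.
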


    \begin{corollary}
        Since $\varphi_q$ permutes a normal basis cyclically,
        the matrix representation $\varrho$ constructed
        from the normal basis satisfies the property that
        $\varrho(\xi)^q$ coincides with the result of
        cyclically permuting the rows and columns of $\varrho(\xi)$.
    \end{corollary}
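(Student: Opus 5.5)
The plan is to compute $\varrho(\xi^q)$ directly from the definition of $\rho_N$ in Step~1 of the proof of Theorem~\ref{thm:main}, using $N = \bma{\nu_0 & \cdots & \nu_{n-1}}$ with $\nu_i \coloneqq \nu^{q^i}$. The normal basis fact guarantees such an $N$ exists. Indices are taken modulo $n$.

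\emph{Step 1: Frobenius shifts the basis.} Since $\nu^{q^n} = \nu$, we have $\varphi_q(\nu_i) = \nu_{i+1}$.

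\emph{Step 2: apply Frobenius to the defining relations.} Write $\varrho(\xi) = [x_{ij}]$, so that
\[ \xi \nu_j = \sum_i x_{ij} \nu_i . \]
Apply $\varphi_q$ to both sides. Because $\varphi_q$ is a field automorphism fixing $\GF(q)$ (the Frobenius fact), we have $x_{ij}^q = x_{ij}$, and therefore
\[ \xi^q \nu_{j+1} = \sum_i x_{ij} \nu_{i+1} . \]
Reindexing gives $\xi^q \nu_j = \sum_i x_{i-1,j-1} \nu_i$. Reading off coefficients in the basis $N$ shows that the $(i,j)$-entry of $\varrho(\xi^q)$ is $x_{i-1,j-1}$.

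\emph{Step 3: express this as a permutation and close the argument.} Let $P$ be the cyclic permutation matrix with $P e_j = e_{j+1}$. Then the entry formula from Step~2 says exactly $\varrho(\xi^q) = P \varrho(\xi) P^\top$, i.e., the rows and columns are shifted cyclically by one. Since $\varrho$ is a ring homomorphism (Step~1 of Section~\ref{sec:main}), $\varrho(\xi)^q = \varrho(\xi^q)$. Combining the two gives the claim.

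There is no serious obstacle; the only care needed is the index bookkeeping modulo $n$. Concretely, one must check the wrap-around entries (index $n-1 \mapsto 0$), which relies precisely on $\nu^{q^n} = \nu$. One should also make sure the direction of the shift matches the convention $P = \bsm{&&1\\1&&\\&1&}$ used in the $\GF(8)$ example.
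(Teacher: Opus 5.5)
Your proof is correct and follows the paper's approach. The paper's only justification is the clause ``$\varphi_q$ permutes a normal basis cyclically'', i.e., $\mu_{\xi^q} = \varphi_q \circ \mu_\xi \circ \varphi_q^{-1}$ with $\varphi_q$ represented by $P$. Your Step~2 is that conjugation written out in coordinates, and your shift direction $Pe_j = e_{j+1}$ correctly matches the paper's $P$.
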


    The affirmative answer only induces a deeper question:
    Can we find normal bases that also visualize the block structure
    of the subfield chains like earlier sections do?
    To this end, we propose the following.

    \begin{theorem} [Frobenius representation]          \label{thm:frobenius}
        Fix a prime power $q$.
        For all coprime degrees $n, m \ge 1$,
        there exists a matrix representation
        $\varrho_{q^n}^m\colon \GF(q^{nm}) \to \GF(q^n)^{m\times m}$
        together with a cyclic permutation matrix
        $P_{q^n}^m \in \{0, 1\}^{m\times m}$
        such that $\varrho_{q^n}^m$ is an injective
        $\GF(q^n)$-algebra homomorphism, hence a field embedding, and,
        for every
        $\xi \in \GF(q^{nm})$,
        \begin{equation}
            \varrho_{q^n}^m(\varphi_{q^n}(\xi)) =
            P_{q^n}^m \varrho_{q^n}^m(\xi) {P_{q^n}^m}^\top.   \label{cyclic}
        \end{equation}
        These maps can be made globally compatible in the sense that,
        for every triple of mutually coprime degrees $n, m, \ell \ge 1$,
        the composition
        \begin{equation}
            \def\.{\cdot}
            \def\r{\varrho_{q^n}^m}
            \def\M{\bsm{\r&\.\.&\r\\:&::&:\\\r&\.\.&\r}}
            \GF(q^{nm\ell}) \xrightarrow{\varrho_{q^{nm}}^\ell} 
            \GF(q^{nm})^{\ell\times\ell} \xrightarrow{\M}
            (\GF(q^n)^{m\times m})^{\ell\times\ell}
            \cong \GF(q^n)^{m\ell\times m\ell}                \label{coprime}
        \end{equation}
        coincides with $\varrho_{q^n}^{m\ell}\colon \GF(q^{nm\ell}) \to
        \GF(q^n)^{m\ell\times m\ell}$ up to row and column permutations.
    \end{theorem}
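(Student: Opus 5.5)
The plan is to build everything from normal elements attached to prime powers and multiplied across coprime degrees. Then the Kronecker-product machinery of Section~\ref{sec:main}, Step~2, gives the block structure \eqref{coprime}, and the normal-basis corollary gives the cyclic shift \eqref{cyclic}.

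\textbf{Choice of elements.} For every prime power $r^a$, fix once and for all a normal element $\nu_{r^a}$ of $\GF(q^{r^a})/\GF(q)$; one exists by the normal basis fact. For general $m = \prod r_i^{a_i}$, define $\nu_m \coloneqq \prod_i \nu_{r_i^{a_i}}$. By construction, $\nu_{m\ell} = \nu_m \nu_\ell$ whenever $\gcd(m,\ell)=1$, so no choice depends on the order of the factors. For $n$ coprime to $m$, define $\varrho_{q^n}^m$ as the representation $\rho_{M}$ of Step~1 with
\[
M \coloneqq \bma{\nu_m & \varphi_{q^n}(\nu_m) & \cdots & \varphi_{q^n}^{m-1}(\nu_m)},
\]
and let $P_{q^n}^m$ be the cyclic shift.

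\textbf{Lemma A (coprime lifting).} If $\gcd(n,m)=1$, then any $\GF(q)$-basis of $\GF(q^m)$ is also a $\GF(q^n)$-basis of $\GF(q^{nm})$.
\begin{itemize}
\item The $\GF(q^n)$-span of $\GF(q^m)$ is closed under multiplication.
\item It is therefore a finite domain, hence a field, containing both $\GF(q^n)$ and $\GF(q^m)$.
\item By Corollary (lcm) it equals the compositum $\GF(q^{nm})$, which has dimension $m$ over $\GF(q^n)$.
\item The $m$ given vectors span an $m$-dimensional space, so they form a basis.
\end{itemize}

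\textbf{Consequence: $M$ is a normal basis over $\GF(q^n)$.} Since $\varphi_{q^n}=\varphi_q^n$ and $n$ is invertible mod $m$, the list $M$ is a reordering of $\nu_m,\nu_m^q,\dots,\nu_m^{q^{m-1}}$. By Lemma A, $M$ is a $\GF(q^n)$-basis of $\GF(q^{nm})$, and $\varphi_{q^n}$ shifts it cyclically. The normal-basis corollary, together with Step~1, then gives \eqref{cyclic} and the injective $\GF(q^n)$-algebra homomorphism.

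\textbf{Lemma B (products are normal).} If $\gcd(m,\ell)=1$ and $\nu_m,\nu_\ell$ are normal over $\GF(q)$, then $\nu_m\nu_\ell$ is normal over $\GF(q)$ in $\GF(q^{m\ell})$.
\begin{itemize}
\item By Lemma A and Step~2, the $m\ell$ products $\nu_\ell^{q^j}\nu_m^{q^i}$ with $0\le j<\ell$ and $0\le i<m$ form a $\GF(q)$-basis of $\GF(q^{m\ell})$.
\item By the Chinese remainder theorem, $k\mapsto (k \bmod \ell,\, k\bmod m)$ is a bijection, so these products are exactly $(\nu_\ell\nu_m)^{q^k}$ for $0\le k<m\ell$, up to order.
\item This is what justifies calling each $\nu_m$ normal, by induction on the number of prime factors of $m$.
\end{itemize}

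\textbf{Compatibility \eqref{coprime}.} For mutually coprime $n,m,\ell$, let $L$ be the basis for $\varrho_{q^{nm}}^\ell$, consisting of the $\varphi_{q^{nm}}$-orbit of $\nu_\ell$, and $M$ the basis for $\varrho_{q^n}^m$. By Step~2, $L\otimes M$ is a $\GF(q^n)$-basis of $\GF(q^{nm\ell})$ whose induced representation is the block composition in \eqref{coprime}. Its entries are $\nu_\ell^{q^{nmj}}\nu_m^{q^{ni}}$.

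Applying the same CRT argument to exponents $q^{nk}$ with $k$ modulo $m\ell$ (using that $n$ is invertible mod $m\ell$), this multiset equals the $\varphi_{q^n}$-orbit of $\nu_\ell\nu_m=\nu_{m\ell}$. That orbit is exactly the basis defining $\varrho_{q^n}^{m\ell}$. Since the two bases differ only by a permutation, the representations agree up to simultaneous row and column permutation, as in the proof of Theorem~\ref{thm:main}.

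\textbf{Main obstacle.} I expect the key point to be Lemma A, i.e.\ that normality survives base change to a coprime field, together with the CRT bookkeeping that turns the Kronecker product of two Frobenius orbits into a single orbit. Everything else reduces to Steps~1--2 and the normal-basis corollary already established.
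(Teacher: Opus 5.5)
Your proof is correct. It uses the same ingredients as the paper: one normal element per prime-power degree, Kronecker products via Step~2, and the Chinese remainder theorem. The difference is which of the two conditions you make hold by construction and which one the CRT is spent on.

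The paper takes the basis of $\GF(q^{nm})/\GF(q^n)$ to be the Kronecker product $N_1\otimes\cdots\otimes N_l$ of the normal bases of the prime-power factors of $m$. Block compatibility \eqref{coprime} then follows directly from Step~2, up to reordering the Kronecker factors. The CRT goes into \eqref{cyclic}: $\varphi_{q^n}$ shifts each $N_i$ by $n$, and the product action is a single $m$-cycle. In Kronecker order that cycle is generally not the standard shift, which the theorem allows.

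You order the same vectors differently. By your Lemma~B they are the paper's vectors up to order, listed as the $\varphi_{q^n}$-orbit of $\nu_m=\prod_i\nu_{r_i^{a_i}}$. So \eqref{cyclic} holds with the standard cyclic shift by construction, and the CRT bookkeeping moves to \eqref{coprime}, where you identify $L\otimes M$ with a permutation of the orbit of $\nu_{m\ell}$.

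What each buys: the paper's bases nest literally as truncated Kronecker products. Yours are honest normal bases in Frobenius order, and the whole system is fixed by the rule $\nu_{m\ell}=\nu_m\nu_\ell$.

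Your Lemma~A is worth keeping. The paper uses it without proof when it asserts that a truncated product such as $N_1\otimes N_2\otimes N_3$ is a basis over $\GF(q^{n_4n_5n_6})$. That needs exactly your statement: a $\GF(q)$-basis of $\GF(q^{n_i})$ stays a basis over a field of coprime degree.

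One presentational fix: your ``Consequence'' uses that $\nu_m$ is normal before Lemma~B establishes it, so state Lemma~B first. There is no circularity, since Lemma~B depends only on Lemma~A and Step~2.
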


\subsection{An example of degree Twelve}

    Before the formal proof, let us use an example to illustrate the idea.
    Because the block-compatibility condition \eqref{coprime} only applies to
    coprime degrees, we do not have to consider factorizations like
    $12 = 2 \cdot 6$, but only the coprime ones like $12 = 4 \cdot 3$.
    That is to say, we only need to declare the basis
    for each extension whose degree is a prime power.

    For the degree-$3$ part, we find a normal element $\beta \in \GF(q^3)$
    and let $B$ be $\bma{\beta & \beta^q & \beta^{q^2}}$.
    For the degree-$4$ part, we find a normal element $\gamma \in \GF(q^4)$ 
    and let $C$ be $\bma{\gamma & \gamma^q & \gamma^{q^2} & \gamma^{q^3}}$.

    It remains to explain why $B \otimes C$
    is a normal basis of $\GF(q^{12}) / \GF(q)$.
    This is a direct consequence of the coprime condition:
    $\varphi_q$ acts on $B$ like the cyclic group $C_3$ of size $3$;
    $\varphi_q$ acts on $C$ like the cyclic group $C_4$ of size $4$.
    So the action of $\varphi_q$ on $B \otimes C$ is equivalent to
    the component-wise action of $C_3 \times C_4$ on $B \times C$.
    But $C_3 \times C_4$ is the cyclic group $C_{12}$ of size $12$,
    which is what we want.

\subsection{What happens when degrees are not coprime}

    From the previous example we see why the coprime conditions
    appear multiple times in Theorem~\ref{thm:frobenius}:
    It is because $C_n \times C_m$ is $C_{nm}$
    if and only if $n$ and $m$ are coprime.
    But this only means that our proof technique is not
    strong enough to handle the non-coprime case,
    not that our desired conclusion is bound to fail.

    Here, we demonstrate a ``counterexample''
    so authors of future works will know what to avoid:
    There are only two field homomorphisms from $\GF(4)$
    to $\GF(2)^{2\times2}$, and both images are \eqref{GF4}.
    While this does represent the Frobenius map by swapping
    the rows and columns, it strongly limits the choices
    of matrices for representing $\GF(16) / \GF(2)$.
    In fact, there are only $4^4 = 256$ matrices in
    $\eqref{GF4}^{2\times2} \subseteq \GF(2)^{4\times4}$
    and $6$ cyclic permutation matrices $P$.
    The only solutions to the equation $X^2 = P X P^\top$
    are the trivial ones: $X = 0 \cdot I_4$ and $X = I_4$.
    That is to say, no nontrivial representation of $\GF(16)$
    can represent the Frobenius map as a cyclic permutation
    while respecting the block structure.

\subsection{Proof of Theorem~\ref{thm:frobenius}}

    Now that we are convinced that Theorem~\ref{thm:frobenius}
    cannot be strengthened so easily, let us prove the current version.
    We follow the strategy suggested by the example above.
    First, for any prime power $n$,
    we find a normal element $\nu \in \GF(q^n)$ over $\GF(q)$.
    The degree-$n$ extension is achieved by the normal basis
    $N \coloneqq \bma{\nu & \nu^q & \cdots & \nu^{q^{n-1}}}$.

    Now, for any degree $m$, factorize $m$ into
    pairwise coprime prime powers $n_1 \dotsm n_l$.
    Find normal elements $\nu_1, \dotsc, \nu_l$ for those prime powers
    and construct the corresponding normal bases $N_1, \dotsc, N_l$.
    We then take the Kronecker product of these normal bases
    to be the basis for the degree-$m$ extension over $\GF(q)$.
    For \emph{relative} extensions such as
    $\GF(q^{n_1n_2n_3n_4n_5n_6}) / \GF(q^{n_4n_5n_6})$,
    we use the truncated product $N_1 \otimes N_2 \otimes N_3$ as the basis.

    The homomorphism condition is automatic because, after a basis is chosen,
    $\varrho$ is defined as the matrix representation of multiplication maps,
    as in Step 1 of Section~\ref{sec:main}.
    The normality of the basis has nothing to do
    with the validity of this argument.

    The cyclic permutation condition \eqref{cyclic} is satisfied
    when $n = 1$ because each $N_i$ is a normal basis,
    $\varphi_q$ acts cyclically on each $N_i$,
    and the component-wise action on $N_1 \times \cdots \times N_l$
    is a single cyclic action of order $m$.
    For $n > 1$, since $n$ is coprime to $m$,
    the map $\varphi_{q^n} = \varphi_q^n$ still acts cyclically on each $N_i$
    for each $n_i \mid m$, and so the same conclusion follows.

    Finally, the block-compatibility condition \eqref{coprime} is satisfied
    because our choice of basis possesses the Kronecker-product structure.
    Cf.\ step 2 of Section~\ref{sec:main}.
    This finishes the proof of Theorem~\ref{thm:frobenius}.

\section{Concluding Remarks}

    The two constructions above emphasize
    the same principle from different directions.
    Conway-compatible primitive elements organize inclusions among
    finite fields, while normal bases organize the Frobenius action.
    In both cases, the visible matrix patterns come from making
    the degree factorization visible at the level of bases.
    However, there does not seem to be a perfect way to add the
    Frobenius action to the picture of Theorem~\ref{thm:main}.
    It is therefore natural to ask what the next best possibilities are:
    how much of the block structure can be retained while also making
    the Frobenius action visible?

\bibliographystyle{alpha}
\bibliography{FiniteField-31}

\appendix

\section{Conway Polynomials}                               \label{sec:conway}

    To prove that Conway polynomials exist, we follow an induction:
    If all lower-degree polynomials satisfy the norm-compatibility
    conditions, then we find a generator $\kappa$ of $\GF(p^k)^*$ such that
    $\kappa^{(p^k-1)/(p^d-1)}$ generates $\GF(p^d)^*$ for every $d \mid k$.
    A formal proof can be found elsewhere so we only provide an example
    that demonstrates the idea better than a formal proof.

    Consider $k = 60$.
    We want to show that there exists $f_p^k$
    such that its root $\kappa$ satisfies:
    \begin{itemize}
        \item $\kappa^{(p^{60}-1)/(p^2-1)} = \omega_2$,
            the root of $f_p^2$ that was chosen to construct $\GF(p^2)$.
        \item $\kappa^{(p^{60}-1)/(p^3-1)} = \omega_3$,
            the root of $f_p^3$ that was chosen to construct $\GF(p^3)$.
        \item The same formulas for the remaining divisors
            $5$, $6$, $10$, $12$, $15$, $20$, and $30$.
    \end{itemize}
    Let $\omega_2, \omega_3, \dotsc, \omega_{59}$ be the roots of
    $f_p^2, f_p^3, \dotsc, f_p^{59}$ that have already been chosen
    by induction.
    The key idea here is that we only have to care about
    the maximal proper divisors of $k$, which are $30$, $20$, and $12$
    in this example, and are of the form $k/\text{prime}$ in general.

    Let $\lambda$ be a generator of $\GF(p^{60})^*$ and set
    \[ r^d_c \coloneqq \frac{p^d-1}{p^c-1} = \frac{[d]_p}{[c]_p} \]
    for any pair $c \mid d$.
    Since $\GF(p^{30})^*$, $\GF(p^{20})^*$, and $\GF(p^{12})^*$
    are the subgroups of $\GF(p^{60})^*$ of sizes
    $p^{30} - 1$, $p^{20} - 1$, and $p^{12} - 1$, respectively,
    $\lambda^{r^{60}_{30}}$, $\lambda^{r^{60}_{20}}$, and
    $\lambda^{r^{60}_{12}}$ generate them.
    And so the already-chosen roots $\omega_{30}$, $\omega_{20}$,
    and $\omega_{12}$ must be some powers of them.
    Let $s_{30}$, $s_{20}$, and $s_{12}$
    be the integers that witness these powers, i.e.,
    \[
        \omega_{30} = \lambda^{r^{60}_{30}s_{30}},\qquad
        \omega_{20} = \lambda^{r^{60}_{20}s_{20}},\qquad
        \omega_{12} = \lambda^{r^{60}_{12}s_{12}}.
    \]
    The induction hypothesis says that proper powers of
    $\omega_{30}$, $\omega_{20}$, and $\omega_{12}$
    should be compatible in smaller subfields;
    this leads to
    \begin{align*}
        \lambda^{r^{60}_{10}s_{30}}
        = \lambda^{r^{60}_{30}r^{30}_{10}s_{30}}
        = \omega_{30}^{r^{30}_{10}}
        = \omega_{10}
        = \omega_{20}^{r^{20}_{10}}
        = \lambda^{r^{60}_{20}r^{20}_{10}s_{20}}
        = \lambda^{r^{60}_{10}s_{20}}
        \in \GF(p^{10})^*,
        \\ \lambda^{r^{60}_{6}s_{30}}
        = \lambda^{r^{60}_{30}r^{30}_{6}s_{30}}
        = \omega_{30}^{r^{30}_{6}}
        = \omega_6
        = \omega_{12}^{r^{12}_{6}}
        = \lambda^{r^{60}_{12}r^{12}_{6}s_{12}}
        = \lambda^{r^{60}_{6}s_{12}}
        \in \GF(p^{6})^*,
        \\ \lambda^{r^{60}_{4}s_{20}}
        = \lambda^{r^{60}_{20}r^{20}_{4}s_{20}}
        = \omega_{20}^{r^{20}_{4}}
        = \omega_4
        = \omega_{12}^{r^{12}_{4}}
        = \lambda^{r^{60}_{12}r^{12}_{4}s_{12}}
        = \lambda^{r^{60}_{4}s_{12}}
        \in \GF(p^{4})^*.
    \end{align*}
    This forces the compatibility conditions on the $s$'s
    \[
        s_{30} \equiv s_{20} \pmod{p^{10} - 1}, \quad
        s_{30} \equiv s_{12} \pmod{p^6 - 1   }, \quad
        s_{20} \equiv s_{12} \pmod{p^4 - 1   }
    \]
    Hence the Chinese remainder theorem
    applies to the congruence equations
    \[
        t \equiv s_{30} \pmod{p^{30} - 1}, \qquad
        t \equiv s_{20} \pmod{p^{20} - 1}, \qquad
        t \equiv s_{12} \pmod{p^{12} - 1}.
    \]
    It remains to choose a solution $t$ and let $\kappa$ be $\lambda^t$.

    We now want to check why this $\kappa$ satisfies
    all the norm-compatibility conditions.
    This is straightforward for the maximal proper divisors.
    \begin{align*}
        \kappa^{r^{60}_{30}} = \lambda^{r^{60}_{30}t}
        = \lambda^{r^{60}_{30}s_{30}} = \omega_{30} \in \GF(p^{30})^*,
        \\ \kappa^{r^{60}_{20}} = \lambda^{r^{60}_{20}t}
        = \lambda^{r^{60}_{20}s_{20}} = \omega_{20} \in \GF(p^{20})^*,
        \\ \kappa^{r^{60}_{12}} = \lambda^{r^{60}_{12}t}
        = \lambda^{r^{60}_{12}s_{12}} = \omega_{12} \in \GF(p^{12})^*.
    \end{align*}
    For the compatibility conditions for smaller divisors,
    we use chain rules.
    For instance, we have
    \begin{align*}
        \kappa^{r^{60}_{15}} = \kappa^{r^{60}_{30}r^{30}_{15}}
        & = \omega_{30}^{r^{30}_{15}} = \omega_{15} \in \GF(p^{15})^*,
        \\ \kappa^{r^{60}_{10}} = \kappa^{r^{60}_{30}r^{30}_{10}}
        & = \omega_{30}^{r^{30}_{10}} = \omega_{10} \in \GF(p^{10})^*,
        \\ \kappa^{r^{60}_{6}} = \kappa^{r^{60}_{30}r^{30}_{6}}
        & = \omega_{30}^{r^{30}_{6}} = \omega_6 \in \GF(p^{6})^*.
    \end{align*}
    For the remaining divisors, apply more chain rules.

    The other thing we have to check is
    whether $\kappa$ generates $\GF(p^{60})^*$.
    Recall that $\lambda$ generates $\GF(p^{60})^*$, so $\kappa = \lambda^t$
    is primitive exactly when $t$ avoids all prime divisors of $p^{60} - 1$.
    This final step is handled by the following claim.

    \begin{claim}
        A solution $t$ of the congruence equations above always avoids all
        prime divisors of $p^{30} - 1$, $p^{20} - 1$, and $p^{12} - 1$.
        Thus some careful choice of $t$ avoids
        all prime divisors of $p^{60} - 1$.
    \end{claim}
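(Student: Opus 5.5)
The plan is to treat the two sentences of the claim separately. The first follows because each $s_d$ must already be a unit modulo $p^d-1$. The second follows from the freedom left in the Chinese remainder theorem.

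For the first sentence, fix $d \in \{30, 20, 12\}$. The element $g_d \coloneqq \lambda^{r^{60}_d}$ generates the cyclic group $\GF(p^d)^*$ of order $p^d - 1$. By the induction hypothesis, $\omega_d = g_d^{s_d}$ is also a generator of $\GF(p^d)^*$, since $\omega_d$ is a root of the primitive polynomial $f_p^d$. In a cyclic group of order $p^d-1$, the power $g_d^{s}$ generates exactly when $\gcd(s, p^d-1) = 1$, so $\gcd(s_d, p^d-1) = 1$. Any solution $t$ of the congruences satisfies $t \equiv s_d \pmod{p^d-1}$, hence $\gcd(t, p^d-1) = \gcd(s_d, p^d-1) = 1$. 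So $t$ avoids every prime divisor of $p^{30}-1$, $p^{20}-1$ and $p^{12}-1$.

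For the second sentence, let $L \coloneqq \lcm(p^{30}-1, p^{20}-1, p^{12}-1)$. Fix one solution $t_0$. The compatibility conditions on the $s$'s guarantee a solution exists, and by the Chinese remainder theorem the full solution set is $\{t_0 + jL : j \in \mathbb{Z}\}$. Let $\ell_1, \dots, \ell_u$ be the primes dividing $p^{60}-1$ but not $L$; these are the only primes not already handled by the first sentence. For each $\ell_i$, since $\ell_i \nmid L$, the number $L$ is invertible modulo $\ell_i$. Hence $t_0 + jL \equiv 0 \pmod{\ell_i}$ holds for exactly one residue class of $j$ modulo $\ell_i$. By the Chinese remainder theorem over the distinct primes $\ell_1, \dots, \ell_u$, we can pick $j$ avoiding the bad class modulo each $\ell_i$; for instance, take $j$ in the class $(-t_0 L^{-1} + 1) \bmod \ell_i$ for each $i$. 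Then $t = t_0 + jL$ is still a solution, is coprime to $L$ by the first sentence, and is coprime to each $\ell_i$. So $\gcd(t, p^{60}-1) = 1$, and $\kappa = \lambda^t$ is primitive.

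The only delicate step is the first one: justifying $\gcd(s_d, p^d-1) = 1$. This needs the observation that the $\omega_d$ chosen earlier in the induction are genuinely primitive, and that $\lambda^{r^{60}_d}$ generates $\GF(p^d)^*$, which is exactly the subfield-criteria fact. The same argument works verbatim for general $k$, with the maximal proper divisors $k/\text{prime}$ replacing $30$, $20$ and $12$.
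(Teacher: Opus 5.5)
Your proof is correct and follows the paper's argument. Primitivity of each $\omega_d$ forces $\gcd(s_d, p^d-1)=1$, any solution $t \equiv s_d \pmod{p^d-1}$ inherits this, and the primes of $p^{60}-1$ not dividing the lcm are avoided by adding a suitable multiple of the lcm; your explicit Chinese-remainder choice of $j$ modulo the leftover primes $\ell_i$ just spells out the step the paper calls trivial.
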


    To prove the claim, note that $\omega_{30}$ is primitive,
    and so $s_{30}$ is coprime to $p^{30} - 1$.
    Similarly, $s_{20}$ and $s_{12}$
    are coprime to $p^{20} - 1$ and $p^{12} - 1$.
    This shows that any solution $t$ must avoid any prime divisor
    of the lcm of $p^{30} - 1$, $p^{20} - 1$, and $p^{12} - 1$.
    It remains to avoid the prime divisors of $p^{60} - 1$
    that do not appear in the lcm, but this is trivial because
    we can add an arbitrary multiple of the lcm to $t$.

\end{document}